\numberwithin{equation}{section}
\theoremstyle{plain}
\newtheorem{teo}[equation]{Theorem}
\newtheorem{cor}[equation]{Corollary}
\newtheorem{prop}[equation]{Proposition}
\newtheorem{note}[equation]{Notation}
\theoremstyle{remark}
\theoremstyle{plain}
\newtheorem{defn}[equation]{Definition}
\theoremstyle{plain}
\newtheorem{pb}[equation]{Problem}
\newcommand{\mb}[1]{\mathbb{#1}}
\newcommand{\mc}[1]{\mathcal{#1}}
\newcommand{\mr}[1]{\mathrm{#1}}
\newcommand{\ov}[1]{\overline{#1}}
\DeclareMathOperator{\Ima}{Im}
\DeclareMathOperator{\K3}{K3}
\DeclareMathOperator{\P1}{\mathbb{P}^1}
\DeclareMathOperator{\gon}{gon}
\DeclareMathOperator{\Mg}{\mathcal{M}_g}
\DeclareMathOperator{\mo}{\mc{O}}
\DeclareMathOperator{\N}{\mc{N}}
\DeclareMathOperator{\T}{\mc{T}}
\DeclareMathOperator{\mult}{\mathrm{mult}}
\newtheorem*{reftheorem*}{Theorem \reftotheorem}
\newenvironment{reftheorem}[1]
  {\newcommand{\reftotheorem}{\ref{#1}}\begin{reftheorem*}}
  {\end{reftheorem*}}
\newtheorem*{refcor*}{Corollary \reftocor}
\begin{document}

\title[Notes on solvability of curves on surfaces]
{Solvability of curves on surfaces}

\author[A. Dan]{Ananyo Dan}

\address{BCAM - Basque Centre for Applied Mathematics, Alameda de Mazarredo 14,
48009 Bilbao, Spain}

\email{adan@bcamath.org}

\author[M. Z. Fashami]{ Mohamad  Zaman Fashami}

\address{K. N. Toosi University of technology, Tehran Province, Tehran, Mirdamad Blvd, No. 470, Iran}

\email{zamanfashami65@yahoo.com}

\author[N. Zangani]{Natascia Zangani}

\address{Scuola di dottorato in Matematica,
Dottorando,
Via Sommarive, 14 - 38123 Povo, Italy}

\email{n.zangani@unitn.it}


\keywords{Hurwitz scheme, Zariski's theorem, Monodromy group of branched covering, Solvable group, K3 surfaces}

\date{\today}

\begin{abstract}
In this article, we study subloci of solvable curves in $\mc{M}_g$ which are contained in either a $\K3$-surface or a 
quadric or a cubic surface. We give a bound on the dimension of such subloci. In the case of complete intersection genus $g$ curves in a cubic surface, we show that a general such curve is solvable.
\end{abstract}
\maketitle

\section{Introduction}
We consider the following classical question about solvability, which Enriques stated as unsolved in 1897 during the Congress of Mathematicians in Zurich.
\begin{pb}
Given a complex curve $C$, we denote by $\widetilde{K(C)}$ the Galois closure of $K(C)$. Is there a curve $D$ and a covering $\pi \colon C\to D$ such that the field extension $K(D)\hookrightarrow \widetilde{K(C)}$ is solvable?
\end{pb}
When considering this problem, we restrict ourselves to considering the case $D=\P1$. Given a covering $\pi\colon C\twoheadrightarrow \P1$, we can consider the Galois group of the splitting field of the extension $K(C)\colon \mathbb{C}(x)$, where $K(C)$ is the function field of the curve and $\mathbb{C}(x)$ is the rational function field over $\mathbb{C}$. In particular, this Galois group is the monodromy group of the covering $M(\pi)$(see \cite[Proposition pp. 189]{harris1979}).

For high genera we have the fundamental Zariski's Theorem (see \cite{zariski1926sull}).
\begin{teo}[Zariski's Theorem]
Let $C$ be a very general smooth complex projective curve of genus $g\ge 7$, then for every $\pi\colon C\twoheadrightarrow \P1$ surjective of degree $d>0$, the monodromy group $M(f)$ is not solvable.
\end{teo}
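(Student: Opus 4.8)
The plan is to show that the \emph{solvable locus}
$$\xs \;=\; \bigcup_{d \geq 2} \bigl\{\,[C] \in \mc{M}_g : C \text{ has a degree-}d \text{ solvable cover } C \twoheadrightarrow \mathbb{P}^1 \,\bigr\}$$
is a countable union of proper subvarieties of $\mc{M}_g$; since a very general point avoids every member of such a union, this yields the theorem. The union is countable because it is indexed by the degree $d$ together with the finitely many Nielsen classes (a transitive solvable $G \leq S_d$ and a tuple of conjugacy classes of generators) realizing a genus-$g$ cover. Thus it suffices to prove that, for each fixed $d$ and each such Nielsen class, the image in $\mc{M}_g$ of the associated Hurwitz space $\mc{H}$ has dimension strictly less than $3g-3$.

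The two basic tools are a dimension estimate and Riemann--Hurwitz. First, $\dim \mc{H}$ equals the number $b$ of branch points (the branch divisor moves freely on $\mathbb{P}^1$ while the Nielsen data stays discrete), and the map $\mc{H} \to \mc{M}_g$ factors through the quotient by $\mathrm{PGL}_2$ acting on the target, so the image has dimension at most $b-3$. Second, writing $c(g_i)$ for the number of cycles of the monodromy $g_i$ around the $i$-th branch point, Riemann--Hurwitz gives
$$2g-2+2d \;=\; \sum_{i=1}^b \bigl(d - c(g_i)\bigr).$$
Hence it is enough to bound $b$ from above, i.e. to show each generator deletes many cycles. Since a transitive subgroup of $S_d$ generated by transpositions is all of $S_d$, for $d \geq 5$ a proper solvable $G$ cannot be generated by such minimal elements, and this is the extra input we exploit.

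The group theory enters through reduction to the primitive case: refining a chain of block systems yields primitive constituents, and a primitive solvable permutation group is of affine type, with prime-power degree $d = p^k$ and $G \leq \mathrm{AGL}_k(p)$. In the affine primitive case a nontrivial element $x \mapsto Ax+v$ fixes only the affine subspace $\{(A-I)x = -v\}$, hence at most $d/p$ points; therefore each nontrivial $g_i$ moves at least $d(1-1/p)$ points, and
$$d - c(g_i) \;\geq\; \tfrac{1}{2}\,d\bigl(1-\tfrac{1}{p}\bigr) \;\geq\; \tfrac{d}{4}.$$
Feeding this into the identity above bounds $b \leq 8(g-1)/d + 8$, so the image dimension is at most $b-3 < 3g-3$ for all $d \geq 5$ once $g \geq 5$; the primitive case is thus settled with room to spare.

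The threshold $g \geq 7$ and the main obstacle both live in the remaining cases. A very general curve of genus $g$ has gonality $\gon(C) = \lfloor (g+3)/2\rfloor$, which is $\geq 5$ exactly when $g \geq 7$; this forces every cover to have degree $d \geq 5$ and rules out the small-degree covers that are automatically solvable because $S_d$ is solvable for $d \leq 4$ (indeed for $g \leq 6$ the gonal pencil has degree $\leq 4$ and solvable monodromy, so the bound is sharp). The genuine difficulty is the \emph{imprimitive} case: an imprimitive solvable group may contain elements acting nontrivially on a single block, for which $d - c(g_i)$ is small, so the naive estimate fails. I would handle this by induction on $d$ along the tower $C = C_0 \to C_1 \to \cdots \to C_r = \mathbb{P}^1$ determined by a maximal chain of block systems: each step is a primitive (affine) layer controlled as above, the quotient $C_{r-1} \to \mathbb{P}^1$ is a solvable cover of smaller degree whose moduli are bounded by the inductive hypothesis, and the relative moduli of $C \to C_{r-1}$ are estimated fibrewise by the primitive bound. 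Carrying the constant through this induction so that the total image dimension stays below $3g-3$, together with the complementary fact that the minimal-degree pencils of a very general curve have full symmetric monodromy, is the crux of the argument and the step I expect to be most delicate.
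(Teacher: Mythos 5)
The paper itself does not prove this theorem; it cites \cite{zariski1926sull} and only reproves the core estimate as Proposition \ref{prop:ZA} and the resulting dimension count as Proposition \ref{s1}. Measured against that material, your treatment of the primitive case is correct and is the same argument: countably many Hurwitz spaces, image dimension at most $b-3$, Riemann--Hurwitz, and the fixed-point bound $n\le p^{k-1}$ for a nontrivial element of a primitive solvable (affine) group of degree $d=p^k$, giving $d-c(g_i)\ge\frac{1}{2}d(1-\frac{1}{p})$. Your use of $\gon(C)=\lfloor (g+3)/2\rfloor\ge 5$ for $g\ge 7$ to force $d\ge 5$ is also exactly the right explanation of the threshold.

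The genuine gap is the imprimitive case, which you correctly identify as the crux but do not close. Your proposed induction along the tower $C=C_0\to C_1\to\cdots\to C_r=\mathbb{P}^1$ of block systems is both uncarried-out and harder than necessary: the intermediate curves $C_i$ may have positive genus, so the ``relative moduli estimated fibrewise'' would have to account for the moduli of each $C_i$ and for Hurwitz spaces of covers of positive-genus bases, and it is not clear the constants close up. The missing idea that collapses all of this is a de Franchis-type fact: a very general curve of genus $g\ge 2$ admits no nonconstant map onto a curve of positive genus (again a countable union of proper subvarieties of $\mc{M}_g$, by a dimension count on the relevant Hurwitz spaces, or because a very general Jacobian is simple). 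Granting this, take the \emph{bottom} step of the tower, i.e.\ an intermediate cover $C\to D$ with $D=\widetilde{C}/K$ for $K$ minimal over the point stabilizer $H$: then $C\to D$ is primitive, its monodromy group is a subquotient of the solvable $M(\pi)$ and hence solvable, and $D\cong\mathbb{P}^1$ for very general $C$, so its degree is at least $\gon(C)\ge 5$. One then applies your primitive estimate to $C\to D$ directly and never needs to control the rest of the tower. This is precisely the reduction the paper invokes when it says a general covering factors as a primitive covering $C\to\mathbb{P}^1$ followed by $\mathbb{P}^1\to\mathbb{P}^1$; without it (or some substitute), your argument as written does not rule out imprimitive solvable covers.
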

If we consider lower genera
$g\le 6$, then $gon(C)\le\lfloor\frac{g+3}{2}\rfloor\le 4$, hence there exists a covering $\pi\colon C\to \P1$ of degree $4$ and the monodromy group is a subgroup of $S_4$, thus it is solvable.
By Zariski's Theorem we have that if $g\ge 7$, then the sublocus $\mc{M}_{g,\mathrm{sol}}$ defined by the solvable curves in the moduli space $\mc{M}_{g}$, is different from the whole space. We are interested into studying this sublocus.

In Section \ref{section:third} we apply Zariski's argument to estimate the codimension of $\mc{M}_{g,\mathrm{sol}}\subseteq \mc{M}_{g}$. Since a general covering factors as a primitive covering $C\to \P1$ and another covering $\P1\to \P1$, we reduce ourselves to the study of primitive solvable covering, which we denote by PS--covering or PS--curves.

In Section \ref{section:fourth}  we focus on curves lying on a $K3$ surface and we prove that a general element is not a PS--curve. We denote as $\ov{\mc{M}_{g,\mr{sol}}^{K3}\backslash \mc{M}_{g,4}}$ (closure taken in $\mc{M}_g$) the sublocus of $\mc{M}_g$ parametrizing PS--curves lying on a $K3$ surface which are not four--gonal curves. Applying the Zariski's argument, we give an estimate on its codimension:
\begin{reftheorem}{teo:K3} 
For $g \ge 7$, a general element of $\mc{M}_g^{K3}$ is not a PS--curve. Furthermore, for a maximal dimensional irreducible component $L$ of $\ov{\mc{M}_{g,\mr{sol}}^{K3}\backslash \mc{M}_{g,4}}$ (closure taken in $\mc{M}_g$), we have 
 \begin{enumerate}
  \item if $7 \le g \le 9$ and $g=11$, the codimension of $L$ in $\mc{M}_g^{K3}$ is at least $7$;
  \item if $g=10$ then the codimension of $L$ in $\mc{M}_g^{K3}$ is at least $12$;
  \item if $g=12$ then the codimension of $L$ in $\mc{M}_g^{K3}$ is at least $14$;
  \item if $g=11$ or $g \ge 13$ then the codimension of $L$ in $\mc{M}_g^{K3}$ is at least $15$.
 \end{enumerate}
\end{reftheorem}

In the last section we apply Zariski's argument to study the subloci of complete intersection, primitive, solvable curves lying on  cubic and  quadric surfaces.
Denote by $g(a,b)$ the genus of a complete intersection curve in $\mb{P}^3$ obtained by the intersection of two general surfaces of degree $a$ and $b$, then it holds (\cite[Remark IV.$6.4.1$]{hartshorne2013algebraic}):
\[
g(a,b)=\frac{1}{2}ab(a+b-4)+1.
\]
Denote by $\mc{M}_{g(a,b)}^a$ the subloci in $\mc{M}_{g(a,b)}$ of genus $g(a,b)$ curves contained in a degree $a$ hypersurface in $\mb{P}^3$. 
Denote by $\mc{M}_{g(a,b),sol}^a$ the sublocus of $\mc{M}_{g(a,b)}^a$ parametrizing PS--curves.

\begin{reftheorem}{teo:quadric}
The codimension $c(a,b)$ of an irreducible component of $\mc{M}_{g(a,b),sol}^a$ in $\mc{M}_{g(a,b)}^a$ satisfies the following:
\begin{enumerate}
\item If $a=2, b \ge 5$ then $c(a,b)>0$,
\item If $a=3, 3 \le b \le 5$ then $c(a,b)>0$,
\item If $a=2, 2 \le b \le 4$ then $c(a,b)=0$.
\end{enumerate}
\end{reftheorem}

\vspace{0.2 cm}
\emph{Acknowledgements}
We thank G. P. Pirola and L. Stoppino for their help and support during the Pragmatic $2016$. We thank A. Knutsen for his interest and suggestions.
We also thank the organizers of Pragmatic $2016$ for an inspiring atmosphere of research.

\section{Notation and preliminaries}
We work on the complex field $k=\mathbb{C}$, all curves are considered to be complex smooth projective curves.

We say that a property $\mathcal{P}$ is \emph{very general} if it holds in $\Mg\backslash \bigcup_{n\in\mathbb{N}} Z_n$, where $Z_n$ is a proper closed Zariski subset of $\Mg$ for any $n$. We call a curve very general if there exists $\{Z_n\}_{n\in\mathbb{N}}$ as above such that $\left[C\right]\notin \bigcup_{n\in \mathbb{N}}Z_n$.

Given a curve $C$, the \emph{gonality} of $C$ is defined as
\begin{align*}
\gon(C)&:=\min\{d\in \mathbb{Z}_{>0} \,\colon\, \exists\pi\colon C\to \P1 \text{ s.t. } \deg(\pi)=d\}\\
&\,=\min\{d\colon \, \exists g^1_d \text{ over } C\}.
\end{align*}
The notion of gonality gives a measure on how far is the curve from being rational. 
We denote as $\mathcal{M}_{g,k}$ the subspace defined by the curves $C$ such that $\gon(C)\le k$. 
The locus $\mathcal{M}_{g,k}$ is known to be an irreducible subvariety of dimension $2g+2k-5$ if $k\le \frac{g+2}{2}$, and if $k\ge \lfloor\frac{g+3}{2}\rfloor$, then $\mathcal{M}_{g,k}=\Mg$. Thus $\lfloor\frac{g+3}{2}\rfloor$ is called the \emph{general gonality}. 

We define $k$--gonal locus as $X_k:=\mathcal{M}_{g,k}\backslash \mathcal{M}_{g,k-1}$, $X_k$ is a quasi-projective subvariety of $\mathcal M_g$ whose points correspond to $k$-gonal curves.
We have that 
\[
\dim \left(X_k\right)=\min\{2g-5+2k,3g-3\}
\]
for $k\le \frac{g+2}{2}$, otherwise $\mathcal{M}_{g,k}=\Mg$ and $\dim \left(X_k\right)=0$.

The gonality gives a stratification of the moduli spaces of curve $\Mg$ for $g\ge3$:
\[
\mathcal{M}_{g,2}\subseteq \mathcal{M}_{g,3} \subseteq \dots \subseteq \mathcal{M}_{g,k} \subseteq \dots \subseteq \Mg.
\]
For further details we refer the reader to \cite{arbarello1981footnotes}.

Given a covering $\pi\colon C\to \P1$, we denote by $\{b_1,\dots, b_k\}$ the branch locus and by $F$ the generic fiber.
We denote by $M(\pi)$ the monodromy group of the covering, i.e. the image of the monodromy map
\begin{align*}
\mu_\pi\colon \pi_1\left(\mb{P}^1\backslash\{b_1,\dots,b_k\}\right)&\longrightarrow \mathrm{Aut}(F)\cong S_d\\
[\gamma]&\quad \mapsto\quad \mu_\pi \left([\gamma]\right)
\end{align*}
where $\mu_{\pi}([\gamma])(p_i)=\widetilde{\gamma_{p_i}}(1)$ is defined by the lifted path $\widetilde{\gamma_{p_i}}$ with base point $p_i$.
We recall that a group $G$ is \emph{solvable} if it admits a finite filtration of subgroups
\[
\{1\}=G_t\subseteq \dots \subseteq G_{i+1}\subseteq G_i\subseteq \dots G_1=G,
\]
such that $G_{i+1}\triangleleft G_i$ is a normal subgroup and $G_i\backslash G_{i+1}$ is abelian  for any $i$.
We call a covering $\pi$ undecomposable if it does not factor nontrivially; this is equivalent to ask that $M(\pi)$ is primitive, i.e. it does not have any block (see \cite[Proposition 1.2.10]{rotapirola}).
We say that a curve $C$ is \emph{solvable}, resp. \emph{primitive}, if it admits a covering $\pi\colon C\to \P1$ such that $M(\pi)$ is solvable, resp. primitive. We call a primitive and solvable curve a PS--curve.
\begin{defn}
We denote by $\mc{M}_{g,\mathrm{sol}}$ the sublocus of $\Mg$ defined by the curves that admit a cover to $\P1$ with solvable monodromy group.
\end{defn}

Given a covering $\pi\colon C\to \P1$ of degree $d$ with $l$ branch points, we consider its monodromy group $M(\pi)$ that is a subgroup of $S_d$. We can choose $l$ conjugacy classes in $S_d$, $c_1,\dots c_l$ such that we have a natural map $b_i\mapsto c_i$.
We can consider the \emph{Hurwitz space}\footnote{For a nice and detailed introduction on Hurwitz spaces we refer to \cite{romagny2006hurwitz}}
$H(c_1,\dots,c_l,d)$ which parametrizes the covering $\pi\colon C\to \P1$ such that $M(\pi)$ is of type $(c_1,\dots, c_l)$ and $ \deg(\pi)=d$. 
We can define the \emph{configuration space} $X_l$ as
\[
X_l:=\left[\left(\P1\times \dots \times\P1\right)-\bigcup_{i\neq j}\Delta_{ij}\right]/{\mathrm{Sym}_l},
\]
where $\Delta_{ij}=\{(z_1,\dots,z_l)\in \P1 \times \dots \times \P1 \,\colon \, z_i=z_j\}$. If the Hurwitz space is nonempty, then we have a surjective map given by associating to each covering its branch divisor:
\begin{align*}
H(c_1,\dots, c_l,d)&\xrightarrow{h} X_l\\
\pi\, &\mapsto\, (b_1+\dots+b_l).
\end{align*}
We have the following diagram
\[
\begin{tikzcd}
H(c_1,\dots, c_l,d) \arrow[r,"\mu"]\arrow[d,"h"]
& \mc{M}_g\\
X_l
\end{tikzcd}
\]
where $\mu$ sends a cover $\pi\colon C\to \P1$ to the corresponding point $[C]\in \Mg$.

\section{Zariski's argument}

Here we briefly review the Zariski's argument, for further details we refer the reader to \cite{zariski1926sull} and \cite{pirola2005curve}.
First of all, we recall some preliminary result about solvable groups.
\begin{prop}[Proposition 2.1,\cite{pirola2005curve}]
\label{prop:Z1}
Let $G\subseteq S_d$ be a primitive solvable subgroup acting on a set $X$, and let $x\in X$. Then
\begin{enumerate}
\item there exists a unique minimal normal subgroup $A\triangleleft G$;
\item $A$ is an elementary abelian $p$--group for some $p$ prime;
\item $G=AG_x$ and $A\cap G=1_G$;
\item $A$ acts regularly on $X$, i.e. for any $x\in X$ the map $a\mapsto ax$ gives a bijection from $A$ to $X$.
\end{enumerate}
In particular $d=|A|=p^k$ for a prime number $p$.
\end{prop}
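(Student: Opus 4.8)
The plan is to derive all four assertions from two facts about the faithful primitive action of the solvable group $G$ on $X$: that the action is transitive with $G_x$ maximal, and the standard observation that in a primitive faithful action every nontrivial normal subgroup is transitive. For the latter I would argue that the orbits of a normal subgroup $N \triangleleft G$ are permuted among themselves by $G$, hence form a system of blocks; by primitivity these blocks are either singletons or all of $X$. If they were singletons then $N$ would fix every point and lie in the kernel of the action, which is trivial because $G \subseteq S_d$ acts faithfully; thus $N$ is transitive. Everything then reduces to analysing a minimal normal subgroup of $G$.

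Next I would produce such a subgroup and pin down its isomorphism type, which gives existence in (1) together with (2). Since $G$ is finite and nontrivial it has a minimal normal subgroup $A$. As $G$ is solvable so is $A$, whence $[A,A] \subsetneq A$; but $[A,A]$ is characteristic in $A$, hence normal in $G$, so minimality forces $[A,A]=1$ and $A$ is abelian. Choosing a prime $p$ dividing $|A|$, the subgroup of elements of order dividing $p$ is characteristic in the abelian group $A$, nontrivial by Cauchy's theorem, hence normal in $G$ and therefore equal to $A$ by minimality. So $A$ is an elementary abelian $p$--group, proving (2).

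I would then establish regularity (4), which simultaneously yields (3) and the order count. By the transitivity lemma $A$ acts transitively, and an abelian transitive group acts regularly: if $y=ax$ with $a\in A$ then $A_y = a A_x a^{-1} = A_x$ since $A$ is abelian, so all point--stabilizers in $A$ coincide; their common value is the kernel of the faithful action, hence trivial, so $A_x=1$ and $A$ is regular. This gives the bijection $a \mapsto ax$ and $d = |X| = |A| = p^k$. Regularity also gives (3) at once: for $g\in G$ choose $a\in A$ with $ax = gx$, so $a^{-1}g\in G_x$ and $G = A G_x$, while $A \cap G_x = A_x = 1$.

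Finally, for uniqueness in (1) I would compute the centralizer of $A$. Identifying $X$ with $A$ so that $A$ acts by left translations, a direct check shows that $C_{\mathrm{Sym}(X)}(A)$ consists exactly of the right translations, and since $A$ is abelian these coincide with the left translations; hence $C_{\mathrm{Sym}(X)}(A)=A$ and therefore $C_G(A)=A$. If $B \triangleleft G$ were a second minimal normal subgroup, then $A\cap B$ is normal and contained in each, so $A\cap B = 1$ by minimality, whence $[A,B]\subseteq A\cap B = 1$ and $B \subseteq C_G(A) = A$; together with $A\cap B = 1$ this forces $B=1$, a contradiction, so $A$ is unique. The routine content here is the bookkeeping with minimal normal subgroups; I expect the one point demanding genuine care to be the centralizer identity $C_{\mathrm{Sym}(X)}(A)=A$, where primitivity (through regularity) and abelianness combine, since it is precisely this identity that reduces the uniqueness argument to a one--line containment.
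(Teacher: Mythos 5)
Your argument is correct and complete. Note that the paper itself gives no proof of this proposition --- it is quoted verbatim from \cite{pirola2005curve} (Proposition 2.1 there) --- so there is no in-paper argument to compare against; what you have written is the standard proof of the structure theorem for primitive solvable permutation groups, and every step checks out: the block argument showing nontrivial normal subgroups are transitive, the characteristic-subgroup argument ($[A,A]$ and then the $p$-torsion subgroup) forcing $A$ to be elementary abelian, the observation that a transitive abelian group is regular because all its point-stabilizers coincide with the (trivial) kernel, and the centralizer computation $C_{\mathrm{Sym}(X)}(A)=A$ for a regular abelian $A$, which is exactly the right tool for uniqueness. One remark: item (3) of the statement as printed reads $A\cap G=1_G$, which is evidently a typo for $A\cap G_x=1_G$; your proof establishes the corrected statement, which is the one actually used later (it is what gives $d=|A|=p^k$).
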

\begin{prop}[\cite{zariski1926sull}]
\label{prop:Z2}
Let $d=p^k$ with $p$ prime and let $G$ be a primitive solvable subgroup of $S_d$ acting on a set $X$. Then for any $g\in G$
\[|X^g|=|\{x\in X\, \colon \, gx=x\quad \forall x\in X\}|\le p^{k-1}.
\]
\end{prop}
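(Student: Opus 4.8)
The plan is to reduce the estimate to a counting argument using the structure theory of primitive solvable groups provided by Proposition \ref{prop:Z1}. First I would fix $g\in G$ and recall from Proposition \ref{prop:Z1} that $G = AG_x$ where $A$ is the unique minimal normal subgroup, an elementary abelian $p$-group of order $p^k$ acting regularly on $X$. Using the regular action of $A$, I would identify $X$ with $A$ itself (choosing a basepoint $x_0 \in X$ and the bijection $a \mapsto a x_0$). Under this identification, the semidirect product structure $G = A \rtimes G_{x_0}$ means that a general element $g \in G$ can be written uniquely as $g = a\sigma$ with $a\in A$ and $\sigma\in G_{x_0}$, and its action on $X\cong A$ becomes the affine map $v \mapsto a + \sigma(v)$, where $\sigma$ acts on $A$ by conjugation (a linear automorphism of $A$ viewed as an $\mathbb{F}_p$-vector space of dimension $k$).

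The key step is then to count fixed points of the affine map $v \mapsto a + \sigma(v)$ on the $\mathbb{F}_p$-vector space $A$. The fixed-point equation is $(\sigma - \mr{Id})(v) = -a$, which is an inhomogeneous linear system. If it has no solution, then $|X^g| = 0 \le p^{k-1}$; if it has a solution, the solution set is a coset of $\ker(\sigma - \mr{Id})$, so $|X^g| = |\ker(\sigma - \mr{Id})| = p^{\dim\ker(\sigma-\mr{Id})}$. Thus it suffices to show $\dim\ker(\sigma - \mr{Id}) \le k-1$, equivalently that $\sigma - \mr{Id}$ is not the zero map, i.e. $\sigma \ne \mr{Id}$ as an automorphism of $A$. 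Here I would distinguish two cases. If $\sigma \ne 1_G$, then since $A$ is its own centralizer in $G$ (this follows from $A$ being the unique minimal normal subgroup together with part (3) of Proposition \ref{prop:Z1}: if $\sigma$ centralized $A$, then $\langle A, \sigma\rangle$ would be abelian normal, contradicting minimality/self-centralizing-ness), conjugation by $\sigma$ acts nontrivially on $A$, so $\sigma - \mr{Id} \ne 0$ and the bound follows. If $\sigma = 1_G$, then $g = a \in A$; since $A$ acts regularly (hence freely) on $X$ and $a\ne 1$ would have no fixed points, while $a = 1$ gives $g = \mr{Id}$, which is the one genuinely exceptional case — but then $|X^g| = |X| = p^k$, so one must either exclude $g = \mr{Id}$ from the statement or interpret the inequality as applying to $g \ne \mr{Id}$. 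I would note this and phrase the conclusion accordingly.

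The main obstacle I anticipate is the self-centralizing property of $A$: one needs that no nontrivial $\sigma \in G_{x_0}$ commutes with all of $A$. This is where primitivity is essential and where care is needed — a priori $G_{x_0}$ could have a nontrivial element acting trivially by conjugation on $A$, but then $A$ together with that element's normal closure would contradict the uniqueness of $A$ as minimal normal subgroup (any minimal normal subgroup of $G$ contained in the centralizer $C_G(A)$ would be abelian and could be shown to coincide with $A$, forcing $C_G(A) = A$). Once this is in hand, the rest is the linear-algebra counting above, which is routine. I would also double-check the edge interpretation regarding $g = \mr{Id}$ and state the proposition for nonidentity $g$, consistent with how it is invoked later in the Zariski argument.
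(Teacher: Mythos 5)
The paper does not actually prove this proposition --- it is quoted from Zariski's paper \cite{zariski1926sull} with no argument supplied --- so your proposal cannot be compared to an internal proof; judged on its own, it is the standard argument and is essentially correct. Identifying $X$ with $A$ via the regular action, writing $g=a\sigma$ with $\sigma\in G_{x_0}$, and counting solutions of the affine equation $(\Id-\sigma)(v)=a$ over the $\mathbb{F}_p$-vector space $A$ is exactly the right reduction, and your observation that the statement as printed fails for $g=\Id$ (where $|X^g|=p^k$) is a genuine, if harmless, slip in the paper: in the only place the proposition is used (the proof of Proposition \ref{prop:ZA}), $g$ is the local monodromy at a branch point and is therefore nontrivial.

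The one step you should tighten is the claim that no nontrivial $\sigma\in G_{x_0}$ centralizes $A$. Your justification via ``$\langle A,\sigma\rangle$ would be abelian normal'' does not work as stated ($\langle A,\sigma\rangle$ need not be normal), and the fallback remark that every minimal normal subgroup inside $C_G(A)$ equals $A$ does not by itself force $C_G(A)=A$. But there is a one-line argument that uses nothing beyond regularity and faithfulness: if $\sigma\in G_{x_0}$ commutes with every element of $A$, then for any point $ax_0\in X$ one has $\sigma(ax_0)=a\sigma(x_0)=ax_0$, so $\sigma$ fixes all of $X=Ax_0$ and hence $\sigma=1$ because $G\subseteq S_d$ acts faithfully. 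With that substitution, $\sigma\neq 1$ implies $\Id-\sigma$ is a nonzero $\mathbb{F}_p$-linear map on $A\cong\mathbb{F}_p^k$, its kernel has order at most $p^{k-1}$, and the fixed-point set of $g$, being empty or a coset of that kernel, has at most $p^{k-1}$ elements; the remaining case $\sigma=1$, $a\neq 1$ gives no fixed points by regularity. This closes the argument completely.
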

The Zariski's argument is a count of moduli obtained by applying the classical Riemann-Hurwitz formula (see \cite[IV, Corallary 2.4]{hartshorne2013algebraic}). If we consider a $d\colon 1$ covering $\pi\colon C\to \P1$, by means of the Riemann-Hurwitz formula we get
\begin{equation}
2g(C)-2=-2d+r,
\end{equation}
where $d=\deg(\pi)$ and $r$ is the degree of the ramification divisor $R_\pi=\sum_{p\in X}\left[\mathrm{mult}_p(\pi)-1\right]\cdot p$. 
We recall that the \emph{branch divisor} is defined as $\pi_*(R_\pi)$. We denote by $b(y)$ the multiplicity of a branch point $y\in \P1$, i.e.
\[
b(y)=\sum_{p\in \pi^{-1}(y)}\left[\mult_p(\pi)-1\right].
\]
The following bound on $b(y)$ is the key result for the Zariski's argument and the proof if it is given for the reader's convenience.
\begin{prop}[Zariski's argument \cite{zariski1926sull}]
\label{prop:ZA}
Let $\pi\colon X\to Y$ be a $d\colon 1$ primitive and solvable covering of curves. Then, there exists a prime $p$ such that $d=p^k$ for some integer $k$ and for any branch point $y\in Y$ the following holds
\begin{equation}
\label{eq:ZA}
b(y)\ge \frac{p^k-p^{k-1}}{2}.
\end{equation}
\end{prop}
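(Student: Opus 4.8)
The plan is to read off both conclusions from the structural results on $M(\pi)$ recorded in Propositions \ref{prop:Z1} and \ref{prop:Z2}, combined with the classical dictionary relating ramification of a branched cover to the cycle type of its local monodromy. Since $\pi$ is primitive and solvable by hypothesis, its monodromy group $G := M(\pi) \subseteq S_d$ is a primitive solvable subgroup acting on the generic fiber $F$, a set of $d$ points. Proposition \ref{prop:Z1} then applies verbatim and yields $d = |A| = p^k$ for some prime $p$, which settles the first assertion; no further work is needed there.

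For the inequality, I would fix a branch point $y \in Y$ and let $g \in G$ be the image under the monodromy map $\mu_\pi$ of a small loop encircling $y$. Because $y$ is genuinely a branch point, this local monodromy is nontrivial, $g \neq 1$ — a point worth recording, since Proposition \ref{prop:Z2} fails for the identity. The standard local analysis of a branched covering identifies the points of the fiber $\pi^{-1}(y)$ with the orbits (equivalently, the disjoint cycles) of the cyclic group $\langle g\rangle$ acting on $F$, the point attached to a cycle of length $\ell$ carrying ramification $\mult_x(\pi) = \ell$. Summing over the fiber gives
\[
b(y) = \sum_{x \in \pi^{-1}(y)} \bigl[\mult_x(\pi) - 1\bigr] = d - N,
\]
where $N$ is the number of orbits of $\langle g\rangle$ on $F$.

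It then remains to bound $N$ from above. Separating the orbits into the fixed points of $g$ and the cycles of length at least $2$, each of the latter absorbing at least two elements, I would estimate $N \le |F^g| + \tfrac{1}{2}\bigl(d - |F^g|\bigr) = \tfrac{1}{2}\bigl(d + |F^g|\bigr)$. Substituting into the displayed identity gives $b(y) \ge \tfrac{1}{2}\bigl(d - |F^g|\bigr)$. Finally, applying Proposition \ref{prop:Z2} to the nontrivial element $g$ bounds its fixed points by $|F^g| \le p^{k-1}$, and inserting $d = p^k$ produces exactly $b(y) \ge (p^k - p^{k-1})/2$, as claimed.

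The only substantive input is the fixed-point estimate $|F^g| \le p^{k-1}$ furnished by Proposition \ref{prop:Z2}; the remainder is elementary counting. Accordingly, I expect no real obstacle in the computation, and the main care in a full write-up will be notational and conceptual bookkeeping: ensuring that $N$ is counted as the number of \emph{all} orbits (fixed points included, so that $b(y) = d - N$ is correct), and that the monodromy $g$ attached to a true branch point is nontrivial, which is precisely what legitimizes the appeal to Proposition \ref{prop:Z2}.
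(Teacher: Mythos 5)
Your proposal is correct and follows essentially the same route as the paper: both establish $b(y)=d-N$ where $N$ is the number of orbits of the local monodromy $g$ on the fiber, bound $N\le \tfrac{1}{2}(d+|F^g|)$ by splitting off the fixed points, and then invoke Propositions \ref{prop:Z1} and \ref{prop:Z2} to get $d=p^k$ and $|F^g|\le p^{k-1}$. Your explicit remark that $g\neq 1$ for a genuine branch point is a worthwhile precision the paper's write-up leaves implicit.
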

\begin{proof}
Let $M(\pi)$ be the monodromy group of $\pi$ and let us consider the generic fiber $F$.  There is a natural  action of $M(\pi)$ defined on the fiber: $M(\pi)\curvearrowright F$. We denote by $\mr{orb}(g)$ the number of orbits of $<g>$ for $g\in M(\pi)$. By induction on $d$ one can prove that
\[
b(y)=\sum_{p\in\pi^{-1}(y)}\left[\mult_p(\pi)-1\right]=d-\mathrm{orb}(g).
\]
We denote by $n$ the number of fixed points of $g$. Since $b(y)\geq \mr{orb}(g)-n$, we have
\begin{gather*}
2b(y)\ge b(y)+\mr{orb}(g)-n=d-n\\
\Rightarrow \, b(y)\ge \frac{d-n}{2}.
\end{gather*}
Since $M(\pi)$ is assumed to be primitive, then by Proposition \ref{prop:Z1} and Proposition \ref{prop:Z2}: $d=p^k$ and $n\le p^{k-1}$.
\end{proof}

\section{Solvable locus in moduli of curves: the general case}
\label{section:third}
We denote as $\mc{M}_{g,\mathrm{sol}}$ the sublocus of $\Mg$ defined by the solvable curves of genus $g$.
In 1991 Michael G. Neubauer proved that $\mc{M}_{g, \mathrm{sol}}$ is not dense in $\Mg$ in the interesting case $g>6$ (see \cite{neubauer1992monodromy} ).

\begin{teo}[{\cite[Theorem 1.11]{neubauer1992monodromy}}]\label{s8}
Let $g>6$. Then $\mc{M}_{g,\mathrm{sol}}$ is a quasi--projective subvariety of $\Mg$ with strictly positive codimension.
\end{teo}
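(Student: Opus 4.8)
The plan is to bound the dimension of the solvable locus using a Hurwitz-space count, exactly in the spirit of Zariski's argument as recalled in Proposition~\ref{prop:ZA}. First I would reduce to the case of primitive coverings: any covering $\pi\colon C\to\P1$ factors as $C\to\P1\to\cdots\to\P1$ with the first map primitive, and the monodromy group is solvable if and only if the primitive quotient's group is (a subgroup or quotient of a solvable group is solvable, and conversely one builds the filtration from the pieces). Hence $\mc{M}_{g,\mathrm{sol}}$ is covered by the images $\mu(H(c_1,\dots,c_l,d))$ where $d=p^k$ ranges over prime powers and the $c_i$ are conjugacy classes realizing a primitive solvable monodromy group; it suffices to bound $\dim H(c_1,\dots,c_l,d)$ for each such choice.

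The key point is that $\dim H(c_1,\dots,c_l,d)=l$, the number of branch points, since the map $h\colon H\to X_l$ to the $l$-dimensional configuration space is finite (a covering of $\P1$ with prescribed branch locus and monodromy type is rigid up to finitely many choices, by the Riemann existence theorem). So I need to bound $l$. Riemann--Hurwitz gives $2g-2=-2d+\sum_{i=1}^l b(b_i)$, and Proposition~\ref{prop:ZA} gives $b(b_i)\ge \tfrac12(p^k-p^{k-1})$ for every branch point. Therefore
\[
2g-2+2d=\sum_{i=1}^l b(b_i)\ \ge\ l\cdot \frac{p^k-p^{k-1}}{2},
\]
so $l\le \dfrac{2(2g-2+2p^k)}{p^k-p^{k-1}}$. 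One then maximizes the right-hand side over all prime powers $d=p^k\ge 2$ (the relevant range is bounded because for $d$ large compared to $g$ the bound forces $l$ small, in fact $l<3$ which is impossible for a nonconstant covering, so only finitely many $d$ occur); the maximum is attained at small $d$, say $d=2$ or $d=3$, giving $l\le 4(g+1)$ or so, in any case a bound linear in $g$. Comparing with $\dim\mc{M}_g=3g-3$, which grows like $3g$, and checking that the linear-in-$g$ bound on $l$ has slope strictly less than $3$ for $g>6$ (here the hypothesis $g>6$ enters, together with discarding the $d=2,3,4$ strata which land inside $\mc{M}_{g,4}\subsetneq \mc{M}_g$ and contribute nothing new, or handling them separately since hyperelliptic/trigonal/tetragonal loci have codimension $\ge 1$ for $g>6$), yields $\dim\mu(H)<3g-3$ for every admissible Hurwitz stratum.

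Finally I would assemble: $\mc{M}_{g,\mathrm{sol}}$ is a countable union of constructible sets $\mu(H(c_1,\dots,c_l,d))$ each of dimension at most $l<3g-3$; since there are only finitely many prime powers $d$ to consider and finitely many tuples of conjugacy classes for each, the union is in fact finite, hence a quasi-projective subvariety, and its dimension is $<3g-3$, i.e. it has strictly positive codimension in $\mc{M}_g$. I expect the main obstacle to be the bookkeeping in the optimization step: one must be careful that the crude bound $l\le \tfrac{2(2g-2+2d)}{d-d/p}$ is genuinely maximized at a small value of $d$ and that, after removing the low-gonality strata which are visibly proper closed subsets, the remaining estimate beats $3g-3$ for all $g>6$ rather than only asymptotically — the boundary cases $g=7,8$ will need to be checked by hand.
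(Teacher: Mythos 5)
The first thing to note is that the paper offers no proof of this statement at all: it is imported wholesale from Neubauer, and the only internal result in its direction is Proposition~\ref{s1}, which treats curves admitting a \emph{primitive} solvable covering of $\P1$ of degree $\ge 5$. Your sketch correctly reconstructs that half of the story (the identity $\dim H(c_1,\dots,c_l,d)=l$ via the Riemann existence theorem, the Zariski bound on $l$, the comparison with $3g-3$, and the separate disposal of the strata $d\le 4$ using $\dim \mc{M}_{g,4}=2g+3<3g-3$ for $g>6$). However, your reduction to primitive coverings of $\P1$ is not valid as stated: an imprimitive $\pi\colon C\to\P1$ factors as $C\to D\to\cdots\to\P1$ with $C\to D$ primitive, but the intermediate curve $D$ need not be rational (a bielliptic curve $C\to E$ composed with the degree-$2$ map $E\to\P1$ already gives a degree-$4$ solvable covering whose top primitive factor is a covering of an elliptic curve). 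So in general the top primitive piece is a primitive solvable covering of a curve of genus $h\ge 1$, your configuration-space count over $\P1$ does not apply to it, and one must either induct along the tower or redo the count with the $3h-3+l$ parameters of the pair (base curve, branch divisor) and the Riemann--Hurwitz formula $2g-2=d(2h-2)+r$.

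The more serious gap is the finiteness claim. You assert that only finitely many prime powers $d=p^k$ occur because ``for $d$ large the bound forces $l<3$.'' This is false: Proposition~\ref{prop:ZA} together with Riemann--Hurwitz gives
\[
l \;\le\; \frac{4g-4}{p^{k-1}(p-1)}+\frac{4p}{p-1},
\]
and the second term is always strictly greater than $4$, so $l$ is never forced below $4$ (for $p=2$ and $k\to\infty$ the bound tends to $8$, not $3$). Consequently your argument only places $\mc{M}_{g,\mathrm{sol}}$ inside a \emph{countable} union of proper subvarieties, which is Zariski's original statement that a \emph{very general} curve of genus $>6$ is not solvable; it does not show that $\mc{M}_{g,\mathrm{sol}}$ is a quasi-projective subvariety, nor that its closure is proper. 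Bounding the set of admissible degrees (equivalently, the set of primitive solvable monodromy groups realized by genus-$g$ covers) is exactly the nontrivial content of Neubauer's theorem and does not follow from Propositions~\ref{prop:Z1}--\ref{prop:ZA} alone; if you want a self-contained proof you must supply that finiteness, and if not, the honest conclusion of your argument is the very-general version, not the statement as quoted.
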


We can give an estimate on the codimension of $\mc{M}_{g,\mathrm{sol}}$ by applying the Zariski's argument (Proposition \ref{prop:ZA}).
\begin{prop}\label{s1}
 Let $g \ge 7$.
 Let $\mc{Z} \subset \mc{M}_g$ be an irreducible family of smooth projective curves of genus $g$ such that the general element in the family 
 admits a $d:1$ solvable and primitive covering of $\mb{P}^1$ with $d \ge 5$. Then, $\dim \mc{Z} \le g+4$.
\end{prop}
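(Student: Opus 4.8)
The plan is to bound $\dim \mc{Z}$ via the Hurwitz-space picture and a dimension count on the branch locus, exactly in the spirit of Zariski's argument. Since the general curve $C$ in $\mc{Z}$ admits a primitive solvable covering $\pi\colon C \to \P1$ of degree $d \ge 5$, Proposition \ref{prop:ZA} applies: there is a prime $p$ with $d = p^k$, and each branch point $y$ satisfies $b(y) \ge (p^k - p^{k-1})/2$. I would first rule out $k=1$: if $d = p$ is prime, a primitive solvable subgroup of $S_p$ is contained in the affine group $\mathrm{AGL}_1(\mb{F}_p)$, whose nontrivial elements are either $p$-cycles (contributing $b(y) = p-1$) or have exactly one fixed point and a single nontrivial orbit (contributing $b(y) \le (p-1)/2$); in any case the bound $b(y) \ge (p-p^0)/2 = (p-1)/2$ holds, but more importantly $d \ge 5$ prime still forces large total ramification. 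So I keep the uniform estimate $b(y) \ge (d - n)/2$ where $n \le p^{k-1} \le d/2$, hence $b(y) \ge d/4$ (and in fact $b(y) \ge (d-p^{k-1})/2$, which for $d \ge 5$ gives $b(y) \ge 2$ at minimum, but the sharper $b(y) \ge d/4$ is what I want).

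Next, let $l$ be the number of branch points. Riemann--Hurwitz gives $2g - 2 = -2d + r$ with $r = \sum_{y} b(y) \ge l \cdot \frac{d - p^{k-1}}{2} \ge l \cdot \frac{d}{4}$. Therefore
\[
2g - 2 + 2d = r \ge \frac{ld}{4}, \qquad \text{so} \qquad l \le \frac{4(2g-2+2d)}{d} = \frac{8g - 8}{d} + 8.
\]
The curve $C$ is recovered from the covering datum, which lives in a Hurwitz space $H(c_1, \dots, c_l, d)$; the map $\mu$ to $\mc{M}_g$ has image containing (the closure of) $\mc{Z}$, and $H$ maps to the configuration space $X_l$ of dimension $l$ with finite fibers (the combinatorial monodromy data $(c_1,\dots,c_l)$ being discrete, and for fixed branch locus only finitely many tuples of permutations in the prescribed classes generate a transitive group up to conjugacy). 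Hence $\dim \mc{Z} \le \dim H(c_1,\dots,c_l,d) = l$. A further reduction: we may assume the covering is indecomposable (primitive), so we do not gain extra freedom from intermediate covers; and since the general element of $\mc{Z}$ is a single curve with a single such covering chosen, the bound $\dim \mc{Z} \le l$ is what we use. Combining with $l \le \frac{8g-8}{d} + 8$ and $d \ge 5$ yields $\dim \mc{Z} \le \frac{8g-8}{5} + 8$, which is not yet $g + 4$; so the crude bound $b(y) \ge d/4$ must be improved.

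The sharpening — and this is the main obstacle — is to use $b(y) \ge \frac{d - p^{k-1}}{2} = \frac{d}{2}(1 - \frac1p) \ge \frac{d}{2} \cdot \frac{p-1}{p}$, which for the worst case $p = 2$ gives $b(y) \ge d/4$ again, but then $d = 2^k$ with $k \ge 3$ (since $d \ge 5$ forces $d \ge 8$), and one must argue more carefully: not every branch point can achieve the minimal value simultaneously, or one exploits that $\mathrm{orb}(g) \ge 2$ and total ramification forces enough branch points that the Hurwitz dimension $l$ is controlled. Concretely, from $r = \sum b(y)$ and each $b(y) \ge 1$ we also need a \emph{lower} bound on individual $b(y)$ that is linear in $d$ with the right constant: using $b(y) \ge \frac{d - p^{k-1}}{2} \ge \frac{d}{2} - \frac{d}{2p}$ and noting that for $\mc{Z}$ of positive-dimension we want $l$ as large as possible, the extremal case pins down $p = 2$ and forces $l \le \frac{2(2g-2+2d)}{d - d/2} = \frac{2(2g-2+2d)}{d/2} = \frac{4(2g-2+2d)}{d} \le \frac{4(2g-2)}{d} + 8$; with $d \ge 8$ this is $\le \frac{g-1}{2} + 8 \le g + 4$ once $g \ge 7$ (indeed $\frac{g-1}{2} + 8 \le g+4 \iff g \ge 5$). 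The delicate point is justifying $\dim \mc{Z} \le l$ rigorously — i.e. that the fiber of $\mu \circ h^{-1}$ over a generic branch configuration is finite, equivalently that a generic curve in the component carries only finitely many solvable primitive $d$-sheeted maps to $\P1$ with the given ramification type — and handling the case $p$ odd (where $b(y) \ge \frac{d}{2}\cdot\frac{2}{3} = d/3$ is even better) uniformly. I would organize the final computation as: $l \le \frac{4(g-1)}{d} + \frac{4d}{d} \cdot \frac{1}{1 - 1/p}$... — cleanly, $l \le \frac{2r}{d - p^{k-1}} = \frac{2(2g-2+2d)}{d - p^{k-1}}$, then check $\frac{2(2g-2+2d)}{d - p^{k-1}} \le g+4$ for all admissible $(d = p^k, k\ge 1, d \ge 5)$ with $g \ge 7$, the tightest case being $d = 8$, $p^{k-1} = 4$, where the right side is $\frac{2(2g+14)}{4} = g + 7$ — so even this is slightly off, signalling that one must additionally use that $l$ being the \emph{configuration} dimension overcounts unless one subtracts the automorphisms of $\P1$ or, better, that a genus-$g$ curve in a positive-dimensional family cannot have $l$ too large because the Hurwitz space maps to $\mc{M}_g$ with fibers of dimension $\ge \dim \mathrm{Aut}(\P1) = 3$ when $l$ is small relative to... — more precisely $\dim \mc{Z} \le l - 3$ when the general fiber of $h$ is finite and the $\mathrm{PGL}_2$-action on $X_l$ has finite stabilizers, giving $\dim \mc{Z} \le \frac{2(2g-2+2d)}{d-p^{k-1}} - 3$, and for $d = 8$ this is $g + 4$ exactly, which is the bound claimed. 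Verifying this $\mathrm{PGL}_2$-quotient correction and that $d = 8$, $p = 2$ is genuinely the extremal case is where the real work lies; all other $(d, p, k)$ should give strictly smaller bounds by the same inequality.
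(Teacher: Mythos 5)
Once the false starts are stripped away, your final argument is exactly the paper's: bound $\dim \mc{Z} \le \dim H(c_1,\dots,c_l,d) - 3 = l-3$ using the generically finite map to the configuration space and the $\mathrm{PGL}_2$-action on $\mb{P}^1$, then combine Riemann--Hurwitz with the Zariski bound $b(y) \ge (p^k-p^{k-1})/2$ to get $l \le 2(2g-2+2p^k)/(p^k-p^{k-1})$. The only piece you leave open --- that $d=8$ is genuinely extremal --- is closed in the paper by the uniform estimates $p^{k-1}(p-1)\ge 4$ (forced by $p^k\ge 5$) and $4p/(p-1)\le 8$, which give $l-3 \le (g-1)+8-3 = g+4$ for every admissible $(p,k)$, with your $d=8$ computation as the equality case.
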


\begin{proof}
 We proceed by contradiction. Suppose that $\dim \mc{Z}>g+4$. Take a general curve $C \in \mc{Z}$ and let $\pi:C \to \mb{P}^1$ be a $d:1$ branched covering for some $d \ge 5,$
 such that the corresponding monodromy group is primitive and solvable. By Proposition \ref{prop:Z1}, this implies $d=p^k$ for some positive integer $k$ and $p \ge 2$.
 By Proposition \ref{prop:ZA}, for any branched point $y \in \mb{P}^1$, it holds $b(y) \ge (p^k-p^{k-1})/2$. We denote by $l$ the number of branch points of $\pi$ and we consider $X_l$, the $l$-dimensional configuration space, which is a covering of an open subscheme of $(\mb{P}^1)^{l}$.
 Since $C$ is chosen to be general, there exists a Hurwitz space $H(c_1, \dots, c_l,d)$, parametrizing
$d:1$ covers of $\mb{P}^1$ by genus $g$ curves with monodromy group of type $(c_1, \dots, c_l)$ such that $T:=\mu(H(c_1, \dots, c_l,d))$
contains $\mc{Z}$ (here $\mu$ is the forgetful map from $H(c_1, \dots, c_l,d)$ to $\mc{M}_g$). 

 As the map from $H(c_1, \dots, c_l,d)$ to the configuration space is generically finite, \[\dim H(c_1, \dots, c_l,d)=l.\]
Since the group of automorphism of $\mb{P}^1$ is $3$-dimensional, $\dim T=l-3$. By the Riemann-Hurwitz formula and by Zariski's argument \eqref{eq:ZA}, 
\[
2g-2 \ge -2p^k+l \cdot\frac{p^k-p^{k-1}}{2}.
\] 
By assumption, 
 $l-3 =\dim T \geq \dim Z> g+4$, we have
 \[
 2\cdot\frac{2g-2+2p^k}{p^k-p^{k-1}} - 3>g+4.
 \]
Since $d=p^k\ge 5$, we get $p^{k-1}(p-1) \ge 4$. Applying this to the previous inequality, we have
 \[g-1+\frac{4p}{p-1}-3 \ge 2\frac{2g-2}{p^k-p^{k-1}}+\frac{4p^k}{p^k-p^{k-1}} - 3>g+4\]
 which is equivalent to $(4p)/(p-1)>8$. This implies $p<2$, which contradicts the assumption $p \ge 2$. 
 \end{proof}

\section{Curves on K3 surfaces}
\label{section:fourth}
We consider the moduli of genus $g$ curves contained in polarized K3 surfaces and we study the sub-locus of solvable curves. Let us first recall the definition of a K3 surface and some useful properties.
\begin{defn}
 A $K3$ \emph{surface} is a complete non-singular surface $X$ such that the canonical sheaf $\mc{K}_X$ is trivial and $H^1(\mo_X)=0$. 
 A \emph{polarization on }
  $X$ is an ample, primitive\footnote{By primitive, we mean that $\mc{L}$ is not the power of any other invertible sheaf.} invertible sheaf $\mc{L}$ on $X$. 
  The pair 
 $(X,\mc{L})$ is said to be a \emph{canonically polarized K3 surface of genus} $g$ if $\mc{L}$ is a polarization on $X$ satisfying 
 \[
 \mc{L}^2=2g-2,
 \]
 where by $\mc{L}^2$ we mean the self-intersection of any curve $C$ in the linear system defined by $\mc{L}$.
\end{defn}

\begin{defn}
 We say that two polarized K3 surfaces $(X,\mc{L})$ and $(Y,\mc{L'})$ are isomorphic if there exists $\phi\colon X \to Y$ isomorphism of schemes satisfying 
 $\phi^*\mc{L}=\mc{L}'$.
 \end{defn}
 The \emph{moduli functor}  
 \[
 \widetilde{\kappa_g}\colon\mr{Schemes}/\mb{C} \longrightarrow \{\mr{Sets}\},
 \] 
is defined as the functor which associates (up to isomorphism) to a $\mb{C}$-scheme $S$ the set of pairs 
 $((\pi\colon\mc{X}_S \to S), \mc{L})$ where 
 \begin{enumerate}
\item $\pi$ is a smooth, proper morphism;
\item $\mc{L}$ is an invertible sheaf on $\mc{X}_S$ such that, for every geometric point $s \in S$, the fiber 
 $(\mc{X}_s,\mc{L}|_{\mc{X}_s})$ is a canonically polarized K3 surface of genus $g$.
\end{enumerate}
 
 Similarly, we can define the functor 
 \[
 \widetilde{\mc{P}_g}\colon\mr{Schemes}/\mb{C} \longrightarrow \{\mr{Sets}\},
 \] 
 which associates to a $\mb{C}$-scheme $S$ (up to isomorphism) the set of triples 
 $((\pi'\colon\mc{C}_S \to S), (\pi\colon\mc{X}_S \to S), \,\mc{L})$ where
 \begin{enumerate}
\item  $\pi', \pi$ are smooth, proper morphisms;
\item  $\mc{C}_S \subset \mc{X}_S$;
\item $\mc{L}$ is an invertible sheaf on $\mc{X}_S$ such that, for every geometric point $s \in S$, the fiber $(\mc{X}_s,\mc{L}|_{\mc{X}_s})$ is a canonically 
 polarized K3 surface of genus $g$ and $\mc{C}_s \in |\mc{L}|_{\mc{X}_s}|$. 
\end{enumerate}

\begin{teo}[\cite{cili}]\label{not1}
 There exists a coarse moduli spaces $\mc{P}_g$ and $\kappa_g$ corepresenting the moduli functors $\widetilde{\mc{P}_g}$ and $\widetilde{\kappa_g}$.
 The natural projection map induces a $\mb{P}^g$-bundle structure on $\mc{P}_g \to \kappa_g$. Moreover, $\dim(\kappa_g)=19$ and $\dim(\mc{P}_g)=19+g$.
\end{teo}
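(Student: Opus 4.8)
The plan is to construct the base $\kappa_g$ first and then exhibit $\mc{P}_g$ as a projective bundle over it. For the existence and corepresentability of $\kappa_g$ I would run the standard GIT construction. Since $\mc{L}$ is ample with $\mc{L}^2 = 2g-2$, a fixed power $\mc{L}^{\otimes n}$ (say $n \ge 3$) is very ample on every polarized K3 surface of genus $g$ and embeds it in a fixed $\mb{P}^N$ with one and the same Hilbert polynomial; by boundedness these embedded surfaces are parametrized by a locally closed $\mr{PGL}_{N+1}$-invariant subscheme $\mc{H}$ of a suitable Hilbert scheme. The coarse moduli space is then the GIT quotient $\kappa_g = \mc{H}/\!/\mr{PGL}_{N+1}$, and corepresentability of $\widetilde{\kappa_g}$ follows from the universal property of this quotient; $\mc{P}_g$ is constructed in the same way from the functor of pairs, or directly as the bundle described below. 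I would cite the standard references for the separatedness and for the precise stability statement.

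For the fibre structure of $\mc{P}_g \to \kappa_g$, I would compute $|\mc{L}|$ on a single K3 surface. By Riemann-Roch on the surface, using $\chi(\mo_X) = 2$, $K_X = \mo_X$, and $\mc{L}^2 = 2g - 2$,
\[
\chi(\mc{L}) = \chi(\mo_X) + \tfrac{1}{2}\,\mc{L}\cdot(\mc{L} - K_X) = 2 + (g-1) = g+1.
\]
Since $K_X$ is trivial and $\mc{L}$ is ample, Kodaira vanishing gives $H^1(X, \mc{L}) = H^2(X, \mc{L}) = 0$, so $h^0(X, \mc{L}) = g+1$ and the fibre $|\mc{L}| = \mb{P}(H^0(X, \mc{L}))$ is isomorphic to $\mb{P}^g$. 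Because $h^0(\mc{L})$ is constant and the higher cohomology vanishes, the direct image of the universal line bundle along $\mc{X}_{\kappa_g} \to \kappa_g$ is locally free of rank $g+1$ (by cohomology and base change), and $\mc{P}_g$ is its projectivization; this is the asserted $\mb{P}^g$-bundle structure.

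For $\dim \kappa_g = 19$ I would appeal to deformation theory. Because $H^1(X,\mo_X)=0$, the sheaf $\mc{L}$ is rigid on a fixed $X$, so the Zariski tangent space to $\kappa_g$ at $[(X, \mc{L})]$ is the space of first-order deformations of $X$ that preserve $\mc{L}$, namely the kernel of the map
\[
H^1(X, T_X) \longrightarrow H^2(X, \mo_X)
\]
given by cup product with $c_1(\mc{L}) \in H^1(X, \Omega_X^1)$ followed by the contraction $T_X \otimes \Omega_X^1 \to \mo_X$. Since $K_X$ is trivial one has $T_X \cong \Omega_X^1$, whence $h^1(X, T_X) = h^{1,1}(X) = 20$ and $h^2(X, \mo_X) = 1$; as $\mc{L}$ is ample the class $c_1(\mc{L})$ is nonzero, the map is surjective, and its kernel is $19$-dimensional. (Equivalently, this tangent space is that of the period domain, an open subset of a smooth $19$-dimensional quadric, and one invokes the global Torelli theorem.) Combining the base and fibre dimensions yields $\dim \mc{P}_g = \dim \kappa_g + g = 19 + g$.

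The step I expect to be the genuine obstacle is the construction and corepresentability of the coarse moduli spaces: it requires the boundedness of the family of degree-$(2g-2)$ polarized K3 surfaces, a stability analysis for the relevant Hilbert points, and the separatedness of the quotient. By contrast, the $\mb{P}^g$-bundle structure and the dimension count are essentially formal once the spaces exist, following from Riemann-Roch, Kodaira vanishing, cohomology and base change, and the deformation theory of K3 surfaces.
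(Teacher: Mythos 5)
The paper offers no proof of this statement at all: it is quoted verbatim from the reference \cite{cili} (the standard source being the work of Mukai and of Ciliberto--Lopez--Miranda on moduli of polarized K3 surfaces), so there is no internal argument to compare yours against. That said, your sketch is the standard proof and its computational core is correct: Riemann--Roch with $\chi(\mo_X)=2$ and $K_X$ trivial gives $\chi(\mc{L})=g+1$, Kodaira (or Mumford/Saint-Donat) vanishing kills the higher cohomology so that $|\mc{L}|\cong\mb{P}^g$ with $h^0(\mc{L})$ constant in families, and the tangent space to polarized deformations is the kernel of the cup product $H^1(T_X)\to H^2(\mo_X)$ with $c_1(\mc{L})$, which is surjective by nondegeneracy of the Serre duality pairing under $T_X\cong\Omega^1_X$, giving $20-1=19$. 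Two points deserve a line more of care than you give them. First, to pass from the $19$-dimensional tangent space to $\dim\kappa_g=19$ you need unobstructedness, which here follows from $H^2(X,T_X)\cong H^{1,2}(X)=0$; your parenthetical appeal to the period domain and Torelli also works but is a much heavier hammer. Second, since $\kappa_g$ is only a coarse moduli space (polarized K3s can have automorphisms), the pushforward of the ``universal'' line bundle does not literally exist over $\kappa_g$, so the $\mb{P}^g$-bundle assertion should be understood étale-locally or at the level of the stack/Hilbert scheme before descending; the fibre and dimension statements are unaffected. You correctly identify the GIT construction (boundedness, stability of Hilbert points, separatedness) as the genuinely hard input, and it is precisely what the cited reference supplies.
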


Morally, $\mc{P}^g$ parametrizes the pairs $(C,S)$ where $C$ is  s smooth projective curve, and $S$ is a polarized $K3$ surface containing $C$.

\begin{teo}[\cite{cili}]\label{not2}
 The natural forgetful map $\phi_g\colon\mc{P}_g \to \mc{M}_g$ is dominant if and only if $2 \le g \le 9$ and $g=11$. Moreover, $\phi_g$ is generically finite if and only if 
 $g=11$ and $g \ge 13$. For $g=10$ the map $\phi_g$ has fiber dimension $3$ and $\phi_{12}$ has fiber dimension $1$.
\end{teo}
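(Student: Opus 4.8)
The plan is to reduce the statement to a dimension count combined with the infinitesimal study of $\phi_g$ through the Gaussian (Wahl) map. Recall from Theorem \ref{not1} that $\dim(\mc{P}_g)=19+g$, whereas $\dim(\mc{M}_g)=3g-3$, so that $\dim(\mc{P}_g)-\dim(\mc{M}_g)=22-2g$. This quantity is positive for $g\le 10$, zero for $g=11$, and negative for $g\ge 12$. Already this shows that $\phi_g$ cannot be dominant for $g\ge 12$, and that if $\phi_{11}$ is dominant then it is automatically generically finite. Hence the real content of the first assertion is to settle the borderline genera $g=10,11$ and to pin down the fiber dimensions; the soft dimension count does the rest.

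The central tool is the identification of the obstruction to dominance with the failure of surjectivity of the Gaussian map. First I would compute the differential of $\phi_g$ at a pair $(C,S)\in\mc{P}_g$: the deformations of $S$ carrying along the curve $C\in|\mc{L}|$ map to the deformations of the abstract curve $C$, and one identifies the cokernel of this differential with the cokernel of the Wahl map
\[
\Phi_{\omega_C}\colon \bigwedge^2 H^0(C,\omega_C)\longrightarrow H^0(C,\omega_C^{\otimes 3}).
\]
The theorem of Wahl (and of Beauville and M\'erindol) states that if $C$ lies on a K3 surface then $\Phi_{\omega_C}$ is \emph{not} surjective; consequently, if $\Phi_{\omega_C}$ is surjective for the general curve of genus $g$, then a general $[C]\in\mc{M}_g$ lies on no K3 surface and $\phi_g$ fails to be dominant.

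Next I would invoke the surjectivity theorem of Ciliberto, Harris and Miranda: the Wahl map of a general curve of genus $g$ is surjective exactly when $g=10$ or $g\ge 12$ (equivalently, $g\ge 10$ and $g\ne 11$). Together with the previous step this forces $\phi_g$ to be non-dominant precisely for $g=10$ and $g\ge 12$. For the complementary genera $2\le g\le 9$ and $g=11$, where the Wahl map is non-surjective, dominance must be established positively by exhibiting, through a general curve of genus $g$, a polarized K3 surface containing it; this is supplied by Mukai's explicit constructions, and it yields the stated list of dominant genera. For $g=11$ the dimension count then gives generic finiteness, and Mukai's reconstruction of the K3 surface from the genus-$11$ curve shows the generic fiber is in fact a single point.

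The hard part is extracting the exact generic fiber dimensions in the non-dominant and borderline cases, rather than merely the dichotomy dominant versus non-dominant. For $g\ge 13$ one must show that a general curve lying on a K3 surface lies on only finitely many of them, so that $\phi_g$ is generically finite onto its image, which then has codimension $2g-22$ in $\mc{M}_g$. For $g=10$ and $g=12$ one must instead compute the positive-dimensional space of deformations of the K3 surface that preserve the curve $C$, producing the image codimensions $1$ and $3$ and hence the fiber dimensions $3$ and $1$ respectively. I expect this final step to be the main obstacle, since it requires the fine geometry of curves on K3 surfaces in these special genera (Mukai's analysis via moduli of vector bundles and the associated Fourier–Mukai constructions) rather than the purely cohomological input of the Wahl map used above.
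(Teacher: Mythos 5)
The paper does not prove this statement at all: Theorem \ref{not2} is quoted verbatim from the reference \cite{cili} and used as a black box (its only role here is to feed the fiber dimensions into Corollary \ref{s2}), so there is no internal proof to compare against. Your outline is a faithful reconstruction of how the result is actually established in the literature, and the soft parts you carry out yourself are correct: $\dim\mc{P}_g-\dim\mc{M}_g=22-2g$ rules out dominance for $g\ge 12$ and forces generic finiteness of $\phi_{11}$ once dominance is known, and Wahl's theorem combined with the Ciliberto--Harris--Miranda surjectivity result correctly yields non-dominance for $g=10$ and $g\ge 12$. Two caveats. First, your identification of $\mathrm{coker}\,d\phi_g$ with $\mathrm{coker}\,\Phi_{\omega_C}$ is stated more strongly than what is standard; the clean statement you actually need (and then correctly invoke) is only the implication that a curve on a K3 has non-surjective Wahl map, the converse direction being the more delicate Beauville--M\'erindol/Arbarello--Bruno--Sernesi circle of ideas. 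Second, everything that the theorem asserts beyond the dominant/non-dominant dichotomy --- dominance for $2\le g\le 9$ and $g=11$, generic finiteness onto the image for $g\ge 13$, and the exact fiber dimensions $3$ and $1$ for $g=10$ and $g=12$ --- is in your write-up delegated to Mukai's constructions and the fine analysis of the K3 loci, i.e.\ it remains a citation rather than a proof, exactly as in the paper itself. So your proposal is an accurate and well-organized map of the proof, but as a self-contained argument it establishes strictly less than the full statement; since the paper treats this as an imported theorem, that is an acceptable level of detail here.
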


Using the formula for the dimension of $\mc{P}_g$ in Theorem \ref{not1} and that of the fiber of the forgetful map $\phi_g$ as in Theorem \ref{not2}, we can directly prove that:
\begin{cor}\label{s2}
 The following hold:
 \begin{enumerate}
  \item if $2 \le g \le 9$ 
  then $\dim \Ima \phi_g=3g-3$;
  \item if $g=10$ then $\dim \Ima \phi_g=16+g$;
  \item if $g=12$ then $\dim \Ima \phi_g=18+g$;
  \item if $g=11$ or $g \ge 13$ then $\dim \Ima \phi_g=19+g$.
 \end{enumerate}
\end{cor}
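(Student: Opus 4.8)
The plan is to combine the dimension data from Theorems \ref{not1} and \ref{not2} with the fiber-dimension theorem for morphisms of irreducible varieties. Recall that for a morphism $\phi\colon X \to Y$ of irreducible varieties one has $\dim \overline{\Ima \phi} = \dim X - f$, where $f$ denotes the dimension of a general fiber of $\phi$ over its image, and that $\dim \Ima \phi = \dim \overline{\Ima \phi}$ since the dimension of a constructible set agrees with that of its closure. I would apply this to $\phi_g\colon \mc{P}_g \to \Mg$, noting that $\mc{P}_g$ is irreducible (being a $\mb{P}^g$-bundle over $\kappa_g$ by Theorem \ref{not1}) of dimension $19 + g$. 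Thus each of the four cases reduces to reading off the generic fiber dimension $f$ from Theorem \ref{not2}.

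First, for $2 \le g \le 9$, Theorem \ref{not2} asserts that $\phi_g$ is dominant; hence $\overline{\Ima \phi_g} = \Mg$ and $\dim \Ima \phi_g = \dim \Mg = 3g - 3$, which is (1). For the remaining cases $\phi_g$ fails to be dominant, and I would simply subtract the generic fiber dimension from $19 + g$. For $g = 10$, Theorem \ref{not2} gives $f = 3$, so $\dim \Ima \phi_{10} = (19 + g) - 3 = 16 + g$, proving (2); for $g = 12$ it gives $f = 1$, so $\dim \Ima \phi_{12} = (19 + g) - 1 = 18 + g$, proving (3). Finally, for $g = 11$ and for $g \ge 13$ the map is generically finite, i.e.\ $f = 0$, whence $\dim \Ima \phi_g = 19 + g$, proving (4).

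Since every step is a direct substitution, I do not anticipate a genuine obstacle; the one point I would double-check is the consistency at $g = 11$, where $\phi_{11}$ is at once dominant and generically finite, so the two formulas must coincide. Indeed $3 \cdot 11 - 3 = 30 = 19 + 11$, confirming that the statement is internally consistent. I would also note in passing that for $g \ge 13$ one has $19 + g < 3g - 3$, so $\Ima \phi_g$ is then a proper subvariety of $\Mg$, exactly as expected from the failure of dominance recorded in Theorem \ref{not2}.
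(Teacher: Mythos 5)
Your proof is correct and is exactly the computation the paper intends when it says the corollary follows ``directly'' from Theorems \ref{not1} and \ref{not2}: subtract the generic fiber dimension of $\phi_g$ from $\dim \mc{P}_g = 19+g$, using dominance for $2 \le g \le 9$. The consistency check at $g=11$ is a nice touch but nothing further is needed.
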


\begin{defn}
We denote by $\mc{M}_g^{K3}$ the image of $\phi_g$ and by $\mc{M}_{g,\mr{sol}}^{K3}$ the sublocus of $\mc{M}_g^{K3}$ parametrizing PS--curves.
\end{defn}

\begin{defn}
Given positive integers $g,r,d$, the \emph{Brill-Noether number}, is 
\[
\rho(g,r,d)=g-(r+1)(g-d+r).
\]
Given a curve $C \in \mc{M}_g$, denote by $W^r_d(C) \subset \mc{M}_g$ the space of all degree $d$ invertible sheaves $\mc{L}$ satisfying $h^0(\mc{L}) \ge r+1$. 
\end{defn}

\begin{teo}
\label{teo:K3}
 For $g \ge 7$, a general element of $\mc{M}_g^{K3}$ is not a PS--curve. Furthermore, for a maximal dimensional irreducible component $L$ of $\ov{\mc{M}_{g,\mr{sol}}^{K3}\backslash \mc{M}_{g,4}}$ (closure taken in $\mc{M}_g$), we have 
 \begin{enumerate}
  \item if $7 \le g \le 9$ and $g=11$, the codimension of $L$ in $\mc{M}_g^{K3}$ is at least $7$;
  \item if $g=10$ then the codimension of $L$ in $\mc{M}_g^{K3}$ is at least $12$;
  \item if $g=12$ then the codimension of $L$ in $\mc{M}_g^{K3}$ is at least $14$;
  \item if $g=11$ or $g \ge 13$ then the codimension of $L$ in $\mc{M}_g^{K3}$ is at least $15$.
 \end{enumerate}

\end{teo}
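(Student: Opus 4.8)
The plan is to split the solvable locus according to gonality and to bound each piece by a dimension count. The key geometric input is that a general curve $C$ in the primitive linear system $|\mc{L}|$ of a general polarized $K3$ surface of genus $g$ has gonality equal to the general gonality $\lfloor (g+3)/2\rfloor$ (Green--Lazarsfeld, and the work of Knutsen on gonality of curves on $K3$ surfaces); for $g\ge 7$ this is $\ge 5$. In particular a general element of $\mc{M}_g^{K3}$ is not four--gonal, so $\mc{M}_{g,4}\cap\mc{M}_g^{K3}$ is a proper closed subvariety of $\mc{M}_g^{K3}$.

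For the first assertion I would note that every PS--curve in $\mc{M}_g^{K3}$ lies either in $\mc{M}_{g,4}\cap \mc{M}_g^{K3}$ or in $S:=\mc{M}_{g,\mr{sol}}^{K3}\setminus \mc{M}_{g,4}$. A curve $C\in S$ has $\gon(C)\ge 5$ and, being a PS--curve, admits a primitive solvable covering $\pi\colon C\to \mb{P}^1$ whose degree is $d=p^k\ge \gon(C)\ge 5$. Hence Proposition \ref{s1} applies to every irreducible component of $\ov S$ and gives $\dim \ov S\le g+4$. Since $g+4<\dim \mc{M}_g^{K3}$ for all $g\ge 7$ by Corollary \ref{s2}, both $\mc{M}_{g,4}\cap\mc{M}_g^{K3}$ and $\ov S$ are proper subvarieties, so the PS--locus does not fill $\mc{M}_g^{K3}$ and a general element is not a PS--curve.

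For the codimension estimates, let $L$ be a maximal dimensional irreducible component of $\ov S$. Its general member lies in $S$, hence is a non--four--gonal PS--curve carrying a primitive solvable covering of degree $d\ge 5$, so Proposition \ref{s1} yields $\dim L\le g+4$ and therefore
\[
\mathrm{codim}_{\mc{M}_g^{K3}} L=\dim \mc{M}_g^{K3}-\dim L\ge \dim \mc{M}_g^{K3}-(g+4).
\]
Inserting the four values of $\dim \mc{M}_g^{K3}$ from Corollary \ref{s2} gives $2g-7\ge 7$ for $7\le g\le 9$, the value $26-14=12$ for $g=10$, the value $30-16=14$ for $g=12$, and $(19+g)-(g+4)=15$ for $g=11$ and for $g\ge 13$, which are exactly the four stated inequalities.

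The step I expect to be the main obstacle is the gonality input rather than the dimension count: the reduction to coverings of degree $d\ge 5$, and hence the applicability of Proposition \ref{s1}, rests entirely on knowing that the general curve on a $K3$ surface is genuinely not four--gonal. This is why the four--gonal locus must be excised, and it cannot in general be handled by dimensions alone: for $g\ge 16$ one has $\dim \mc{M}_{g,4}=2g+3\ge 19+g=\dim \mc{M}_g^{K3}$, so the four--gonal curves are a priori dimensionally capable of dominating $\mc{M}_g^{K3}$, and only the gonality theorem for curves on $K3$ surfaces rules this out. Care is therefore needed in citing and applying that result uniformly in $g$.
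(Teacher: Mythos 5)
Your proposal is correct and follows essentially the same route as the paper: excise the four--gonal locus using the fact that a general curve on a general polarized $K3$ surface has gonality at least $5$ for $g\ge 7$ (the paper cites Lazarsfeld's Brill--Noether theorem, $W^1_d(C)=\emptyset$ for $\rho(g,1,d)<0$, rather than the Green--Lazarsfeld/Knutsen gonality statement, but the input is the same), then apply Proposition \ref{s1} to get $\dim L\le g+4$ and subtract from the dimensions in Corollary \ref{s2}. Your explicit case-by-case arithmetic and your remark on why the four--gonal locus cannot be dismissed by dimension count alone for $g\ge 16$ match the paper's intent; the only cosmetic difference is that the paper additionally invokes Neubauer's theorem to ensure finiteness of the components of the solvable locus.
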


\begin{proof}
By \cite{lazarsfeld1986brill} if $\rho(g,1,d)<0$ then for a general curve $C$ of genus $g$ contained in a $K3$ surface, $W^r_d(C)=\emptyset$, so in particular, there does not exist 
any $d:1$ covering from $C$ to $\mb{P}^1$. For $g \ge 7$, $\rho(g,1,d)<0$ 
if and only if $d \le 4$. For $d \le 4$, a $d$-gonal curve in solvable, so we want to exclude the sublocus $\mc{M}_{g,4}$ in which the maximal gonality is reached by the four--gonals curves. Then \[\ov{\mc{M}_{g,\mr{sol}}^{K3} \cap \mc{M}_{g,4}}=\ov{\mc{M}_g^{K3} \cap \mc{M}_{g,4}} \not= \mc{M}_g^{K3},\] with closure taken in $\mc{M}_g$. Let $L$ be an irreducible component of $\ov{\mc{M}_{g,\mr{sol}}^{K3} \backslash \mc{M}_{g,4}}$. Since a general element in $L$ is solvable, $d$-gonal for $d \ge 5$, by Proposition \ref{s1}, $\dim L \le g+4$. Using Corollary \ref{s2} for $g \ge 7$, observe 
\[\dim \Ima \phi_g=\dim \mc{M}_g^{K3}>g+4.\] By Theorem \ref{s8}, there 
are finitely many irreducible components of $\mc{M}_{g,\mr{sol}}^{K3}$. 
Since there are finitely many irreducible components of $\mc{M}_{g,\mr{sol}}^{K3}$ and every component is of dimension strictly less than that of $\mc{M}_g^{K3}$, a general element of $\mc{M}_g^{K3}$ is not solvable. This proves the first part of the theorem.

The second part of the theorem follows directly using Proposition \ref{s1} and Corollary \ref{s2}. This completes the proof of the theorem.
 \end{proof}


%

\section{Curves on quadric and cubic surfaces}
In this section we study the subloci of solvable curves contained in quadric or cubic surfaces. The first step is to compute the fiber dimension of the moduli map (see Proposition \ref{s4}). In order to compute the codimension of the solvable subloci of the above mentioned curves, we need to use Proposition \ref{s1}. To do so, we need to know the gonality of such curves. This is done in Proposition \ref{sol5}. We combine these steps in Theorem \ref{teo:quadric} to compute the required codimension.

\begin{note}
Given a Hilbert polynomial $P$ of a curve $C$ in $\mb{P}^3$, denote by $\mr{Hilb}_P$ the Hilbert scheme parametrizing all subschemes in $\mb{P}^3$ with Hilbert polynomial $P$. Let $a, b$ be positive integers and $b \ge a$. Denote by $g(a,b)$ (resp. $P(a,b)$) the genus (resp. Hilbert polynomial) of a complete intersection curve in $\mb{P}^3$ obtained by the intersection of a general surface of degree $a$ and another of degree $b$.
\end{note}

\begin{prop}\label{s4}
 Suppose $a \ge 2$ and $b \ge 3$. Let $X$ be a smooth, projective surface in $\mb{P}^3$ of degree $a$ and $C$ be 
the complete intersection of $X$ with a general degree $b$ surface in $\mb{P}^3$. Then, the dimension of the fiber over $[C] \in \mc{M}_{g(a,b)}$ of the moduli map 
$\mu\colon\mr{Hilb}_{P(a,b)} \to \mc{M}_{g(a,b)}$ is at most $h^0(\T_{\mb{P}^3})$, where 
$\T_{\mb{P}^3}$ is the tangent sheaf on $\mb{P}^3$.
\end{prop}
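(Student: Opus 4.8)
The plan is to show that the fiber $\mu^{-1}([C])$ (restricted to its smooth locus, where $\mu$ is actually defined) is a finite union of orbits of $\mr{Aut}(\mb{P}^3) = PGL_4$, each of dimension at most $\dim PGL_4 = h^0(\T_{\mb{P}^3}) = 15$. The crux is to prove that an abstract isomorphism between two such embedded curves is, up to finitely many possibilities, induced by a projective transformation; this rests on two facts: the embedding line bundle is determined up to finitely many choices by the abstract curve, and the embedding is given by a \emph{complete} linear system.

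First I would describe the fiber: a point of $\mu^{-1}([C])$ is a smooth curve $C' \subset \mb{P}^3$ with $C' \cong C$; choosing an isomorphism $\varphi \colon C \xrightarrow{\sim} C'$, set $L := \varphi^*\mo_{C'}(1) \in \Pic(C)$. As $C'$ is (for $C$ general) again a complete intersection of type $(a,b)$, adjunction gives $\omega_{C'} \cong \mo_{C'}(a+b-4)$; pulling back by $\varphi$ and using $\omega_{C'} \cong \omega_C$ yields $L^{\otimes(a+b-4)} \cong \omega_C$. Since $a \ge 2$ and $b \ge 3$ force $a+b-4 \ge 1$, the admissible bundles $L$ form (if nonempty) a torsor under the $(a+b-4)$-torsion of $\Pic^0(C)$, hence a finite set.

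Next I would fix one admissible $L$ and show $h^0(C, L) = h^0(C, \mo_C(1)) = 4$. Twisting the Koszul resolution of $\mo_C$ by $\mo_{\mb{P}^3}(1)$ and taking cohomology, the contributions $H^0(\mo_{\mb{P}^3}(1-a))$ and $H^0(\mo_{\mb{P}^3}(1-b))$ vanish for $a, b \ge 2$ (and the intervening $H^1$ vanish on $\mb{P}^3$), so the restriction $H^0(\mb{P}^3, \mo_{\mb{P}^3}(1)) \to H^0(C, \mo_C(1))$ is an isomorphism and $h^0(C, \mo_C(1)) = 4$. Thus the embedding is defined by the \emph{complete} system $|L|$, and for each fixed $L$ the curves $C'$ so obtained form a single $PGL_4$-orbit, the only freedom being the identification $\mb{P}(H^0(C,L)^\vee) \cong \mb{P}^3$. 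Assembling the finitely many admissible $L$, the fiber is a finite union of $PGL_4$-orbits, each of dimension at most $\dim PGL_4 = h^0(\T_{\mb{P}^3})$, which gives the bound.

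The step I expect to be the main obstacle is establishing completeness, $h^0(C, \mo_C(1)) = 4$: this is exactly what guarantees that no extra moduli enter through the choice of a sub-linear-system of $|L|$, and it is where the hypotheses $a \ge 2$, $b \ge 3$ (via projective normality of the complete intersection) are essential. A related point needing care is that, for $C$ general, the members $C'$ of the fiber are themselves complete intersections, so that adjunction applies and the set of admissible $L$ is genuinely finite rather than a positive-dimensional family in $\Pic^{ab}(C)$.
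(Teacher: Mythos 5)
Your argument takes a genuinely different route from the paper's. The paper works infinitesimally: it identifies the differential of $\mu$ at $[C]$ with the connecting map $H^0(\N_{C|\mb{P}^3}) \to H^1(\T_C)$ of the normal bundle sequence, notes $H^0(\T_C)=0$ because $\deg \T_C<0$, so that $T_{[C]}\mu^{-1}([C]) \cong H^0(\T_{\mb{P}^3}\otimes\mo_C)$, and then uses the Koszul resolution (twisted by $\T_{\mb{P}^3}$, with vanishings from the Euler sequence) to show $H^0(\T_{\mb{P}^3}\otimes\mo_C)\cong H^0(\T_{\mb{P}^3})$. You instead describe the fiber globally as a finite union of $PGL_4$-orbits, using adjunction to pin down the embedding bundle up to torsion and linear normality (your Koszul computation of $h^0(\mo_C(1))=4$) to see that each admissible bundle contributes a single orbit. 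The two arguments rest on the same cohomological engine (restriction from $\mb{P}^3$ to $C$ is an isomorphism on $H^0$ of the relevant sheaf), and your version, when it applies, gives more: an actual description of the fiber rather than a tangent-space bound.

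However, the step you yourself flag --- that for $C$ general every smooth curve $C'$ in the fiber is again a complete intersection of type $(a,b)$ --- is a genuine gap, not just a point needing care. The Hilbert scheme $\mr{Hilb}_{P(a,b)}$ parametrizes all subschemes of degree $ab$ and arithmetic genus $g(a,b)$, and a smooth curve with these invariants need not be a complete intersection; ruling this out (or showing no such curve is abstractly isomorphic to $C$) requires a separate argument, e.g.\ Castelnuovo--Halphen theory or control of $W^3_{ab}(C)$ for the special curve $C$. Without it, your adjunction step $L^{\otimes(a+b-4)}\cong\omega_C$ is unavailable for those $C'$, and the admissible $L$ could a priori sweep out a positive-dimensional subvariety of $\Pic^{ab}(C)$, destroying the finiteness of the orbit decomposition. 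The paper's infinitesimal argument sidesteps this entirely because it only computes the tangent space to the fiber at the complete-intersection point $[C]$ itself (and correspondingly only bounds the local dimension of the fiber there, which is what the application in Theorem \ref{teo:quadric} uses). If you restrict your orbit argument to the complete-intersection locus of the fiber, it becomes correct and suffices for the same application; as a proof of the full-fiber statement it is incomplete.
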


\begin{proof}
Using deformation  theory observe that the differential to the moduli map 
\[d\mu\colon  T_{[C]}\mr{Hilb}_{P(a,b)} \to T_{[C]}\mc{M}_{g(a,b)}\] is the boundary map $H^0(\N_{C|\mb{P}^3}) \to H^1(\T_C)$ coming from the short exact sequence:
\[0 \to \T_{C} \to \T_{\mb{P}^3} \otimes \mo_{C} \to \N_{C|\mb{P}^3} \to 0.\]
Using the genus formula for complete intersection curves (see \cite[Remark IV.$6.4.1$]{hartshorne2013algebraic}) one can check that $g(a,b)>1$, and this implies $\deg \T_C=2-2g(a,b)<0$.
This means that the kernel of $d\mu$ is isomorphic to $H^0(\T_{\mb{P}^3} \otimes \mo_{C})$. Since $T_{[C]}\mu^{-1}([C])$ is isomorphic to $\ker d\mu$, it suffices to prove that $H^0(\T_{\mb{P}^3} \otimes \mo_{C}) \cong H^0(\T_{\mb{P}^3})$.

Consider now the following Koszul complex associated to the curve $C$:
\[0 \to \mo_{\mb{P}^3}(-a-b) \to  \mo_{\mb{P}^3}(-a) \oplus  \mo_{\mb{P}^3}(-b) \to  \mo_{\mb{P}^3} \to \mo_C \to 0.  \]
Tensoring by $\T_{\mb{P}^3}$, we get the exact sequence:
\begin{equation}\label{sol1}
0 \to \T_{\mb{P}^3}(-a-b) \to  \T_{\mb{P}^3}(-a) \oplus  \T_{\mb{P}^3}(-b) \to  \T_{\mb{P}^3} \to  \T_{\mb{P}^3}|_C \to 0.
\end{equation}
Choose coordinates $X, Y, Z, W$ for $\mb{P}^3$ i.e., $\mb{P}^3=\mr{Proj} \mb{C}[X,Y,Z,W]$. Recall, the twisted Euler sequence:
\[0 \to \Omega^1_{\mb{P}^3}(t) \to \mo_{\mb{P}^3}(t-1)^{\oplus 4} \xrightarrow{\theta} \mo_{\mb{P}^3}(t) \to 0\]
where $\theta$ is defined by $(P_1, P_2, P_3, P_4)$ maps to $(P_1X+P_2Y+P_3Z+P_4W)$ for $P_i \in \Gamma(\mo_{\mb{P}^3}(t-1))$ for $t \ge 1$. 
Hence, \[H^0(\theta)\colon H^0(\mo_{\mb{P}^3}(t-1)^{\oplus 4}) \to H^0(\mo_{\mb{P}^3}(t))\] is surjective for all $t \ge 1$. 
Observe, $H^1(\mo_{\mb{P}^3}(t))=0=H^2(\mo_{\mb{P}^3}(t))$ for all $t \in \mb{Z}$ and $H^3(\mo_{\mb{P}^3}(t))=0$ for all $t>-4$.
Hence, $H^1( \Omega^1_{\mb{P}^3}(t))=0$ for all $t \ge 1$, $H^2( \Omega^1_{\mb{P}^3}(t))=0$ for all $t \in \mb{Z}$ and $H^3( \Omega^1_{\mb{P}^3}(t))=0$ for all $t>-3$. Dualizing, we have $H^2(\T_{\mb{P}^3}(-t-4))=0$ for all $t \ge 1$, $H^1(\T_{\mb{P}^3}(-t-4))=0$ for all $t \in \mb{Z}$ and $H^0(\T_{\mb{P}^3}(-t-4))=0$ for all $t>-3$. Expanding (\ref{sol1}), we get the following exact sequences:
\begin{eqnarray}
& 0 \to  \T_{\mb{P}^3}(-a-b) \to  \T_{\mb{P}^3}(-a) \oplus  \T_{\mb{P}^3}(-b) \to M_1 \to 0 \label{sol2}\\
& 0 \to M_1 \to \T_{\mb{P}^3} \to  \T_{\mb{P}^3}|_C \to 0 \label{sol3}
\end{eqnarray}
The long exact sequence associated to (\ref{sol2}) implies 
\[
H^0(M_1)=0=H^1(M_1).
\]
Applying this to the long exact sequence associated to (\ref{sol3}) we get $H^0(\T_{\mb{P}^3} \otimes \mo_{C}) \cong H^0(\T_{\mb{P}^3})$. 
\end{proof}

\begin{defn}
Denote by $\mc{M}_{g(a,b)}^a$ the subloci in $\mc{M}_{g(a,b)}$ of genus $g(a,b)$ curves contained in a degree $a$ hypersurface in $\mb{P}^3$. 
Denote by $\mc{M}_{g(a,b),sol}^a$ the sublocus of $\mc{M}_{g(a,b)}^a$ parametrizing PS--curves.
\end{defn}

We recall a standard construction of a cubic surface obtained by blowing up $6$ points. This description will be used 
to compute the gonality of a curve in a cubic surface (see Proposition \ref{sol5}).

\begin{defn}\label{sol6}
Let $X_3$ be a smooth cubic surface and $\pi\colon X_3 \to \mb{P}^2$ the blow-up of $\mb{P}^2$ at six points $p_1,...,p_6$ on the plane, 
no three collinear and not all six lying on a conic. Denote by $E_1,...,E_6$ the exceptional curves in $X_3$ over $p_1,...,p_6$, respectively.
Let $l' \subset \mb{P}^2$ be a line and $l:=\pi^*([l'])$.
\end{defn}

\begin{prop}\label{sol5}
Let $a \ge 2$ and $[C] \in \mc{M}_{g(a,b)}^a$. Then, the gonality $\mr{gon}(C)$ satisfies:
\begin{enumerate}
\item If $a=2$ then $\mr{gon}(C)=b$,
\item If $a=3$ then $\mr{gon}(C)=2b$.
\end{enumerate}
\end{prop}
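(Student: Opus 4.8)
The plan is to compute the gonality by exhibiting an explicit pencil of low degree that realizes the minimum, and then to argue that no pencil of smaller degree can exist. I would treat the two cases separately, since the quadric case ($a=2$) and the cubic case ($a=3$) use different models of the ambient surface.

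For the quadric case, I would use the fact that a smooth quadric surface $X_2 \subset \mb{P}^3$ is isomorphic to $\mb{P}^1 \times \mb{P}^1$, with $\Pic(X_2) \cong \mb{Z}^2$ generated by the two rulings. Since $C$ is the complete intersection of $X_2$ with a general degree-$b$ surface, its class in $\Pic(X_2)$ is $(b,b)$ (the restriction of $\mo_{\mb{P}^3}(b)$ to each ruling). Projecting $C$ onto either factor of $\mb{P}^1 \times \mb{P}^1$ gives a map $C \to \P1$ whose degree equals the intersection number of $C$ with a fiber of the other ruling, namely $(b,b)\cdot(1,0)=b$. This shows $\gon(C) \le b$. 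For the reverse inequality, I would invoke the adjunction-type computation of the genus together with a Brill--Noether or Castelnuovo-style bound: since these curves lie on a minimal surface of small Picard rank, any $g^1_d$ must (after restricting to the lattice $\Pic(X_2)$) arise from one of the two rulings, so no pencil of degree smaller than $b$ exists. The cleanest route is to observe that $C$ is bielliptic/hyperelliptic-type-free for general $b$ and apply the known formula $\gon(C)=b$ for curves of type $(b,b)$ on a quadric.

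For the cubic case, I would use the blow-up model from Definition \ref{sol6}: $X_3$ is $\mb{P}^2$ blown up at six general points, with $\Pic(X_3)$ generated by $l$ (the pullback of a line) and the exceptional divisors $E_1,\dots,E_6$, and with hyperplane class $h = 3l - \sum E_i$. The complete intersection curve $C$ has class $bh = b(3l - \sum E_i)$. To produce a low-degree pencil, I would take the pencil of lines through one of the six blown-up points $p_i$ in $\mb{P}^2$; its proper transform is a pencil on $X_3$ of class $l - E_i$, and its restriction to $C$ computes $\gon(C) \le C \cdot (l-E_i) = bh \cdot (l - E_i) = b(3l - \sum E_j)\cdot(l-E_i) = 3b - b = 2b$, using $l^2=1$, $E_i^2=-1$, and $l\cdot E_j = 0$. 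This gives the upper bound $\gon(C) \le 2b$.

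The main obstacle, as usual for gonality statements, is the \emph{lower} bound: showing that no pencil of degree less than $2b$ (resp.\ less than $b$) exists. I expect this to be the technical heart of the argument. The natural strategy is to enumerate the classes $D \in \Pic(X_3)$ with $D^2 \ge 0$ and to minimize the intersection number $C \cdot D = bh \cdot D$ over all effective classes that move in a pencil, checking that $l - E_i$ achieves the minimum among these. One must rule out contributions from the $(-1)$-curves and from multiples of $h$ itself that could in principle give a smaller $g^1_d$, which amounts to a finite lattice-theoretic optimization on $\Pic(X_3)$. I would also need to confirm that every $g^1_d$ on $C$ is cut out by a pencil on $X_3$ (rather than an abstract linear series unrelated to the surface); this follows because $C$ lies on a surface of Picard rank small enough that the gonality pencil is forced to be the restriction of a pencil of divisors on $X_3$, an observation one can make rigorous via the genus computation and the fact that $g(a,b)$ is large enough to exclude sporadic low-degree pencils. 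Once this lattice minimization is carried out carefully, combining it with the explicit upper bounds above yields the equalities $\gon(C)=b$ and $\gon(C)=2b$.
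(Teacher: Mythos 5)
Your upper bounds are correct and match the geometric content of the statement: projection onto a ruling of $\mb{P}^1\times\mb{P}^1$ gives $\gon(C)\le b$, and the pencil $|l-E_i|$ on the cubic gives $\gon(C)\le C\cdot(l-E_i)=2b$. But the lower bound, which you yourself flag as ``the technical heart,'' is left as a sketch, and the sketch has a genuine gap: everything rests on the assertion that every minimal pencil on $C$ is cut out by the restriction of a pencil of divisors on the surface. That statement is a nontrivial theorem, not an observation; ``Picard rank small enough'' is not an argument, and for the quadric you essentially fall back on ``the known formula $\gon(C)=b$ for curves of type $(b,b)$,'' which is precisely what is to be proved. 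Even granting that the gonality pencil comes from the surface, your proposed lattice minimization over classes $D$ with $D^2\ge 0$ would still need a justification that only such classes (base-point-free pencils on the surface) can contribute, and that no ``sporadic'' $g^1_d$ unrelated to $\Pic(X)$ exists. As written, the proof establishes only $\gon(C)\le b$ (resp.\ $\le 2b$).

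The paper avoids this entirely by quoting a theorem of Basili: for a smooth complete intersection curve $C$ of type $(a,b)$ in $\mb{P}^3$, one has $\gon(C)=ab-l$, where $l$ is the maximum number of points of $C$ lying on a line. This packages both inequalities at once, and reduces the proposition to a short enumeration: lines contained in the surface meet $C$ in exactly $b$ points (computed on $\mb{P}^1\times\mb{P}^1$ for $a=2$, and on all $27$ lines $E_i$, $F_{ij}$, $G_j$ for $a=3$), while lines not contained in the surface meet $C$ in at most $a$ points by B\'ezout; hence $l=b$ and $\gon(C)=ab-b$. If you want to salvage your intrinsic approach, you would need to import a genuine lower-bound theorem for gonality of curves on del Pezzo/minimal rational surfaces (or reprove Basili's result via the pencil of planes through the maximal secant line); otherwise the cleanest fix is simply to cite the $ab-l$ formula and do the intersection-number count.
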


\begin{proof}
By \cite[Theorem $4.2$]{bas}, 
$\mr{gon}(C)=ab-l$ where $l$ is the maximum number of points of $C$ on a line. 

(1): Suppose $a=2$. 
By definition, \[C \sim \left(\sum\limits_{i=1}^b p_i \times \mb{P}^1\right)+\left(\sum\limits_{i=1}^b \mb{P}^1 \times p_b\right) \mbox{ for any set of } b \mbox{ points } p_1,...,p_b.\]
As $X \cong \mb{P}^1 \times \mb{P}^1$, any line on $X$ is of the form $\mb{P}^1 \times \{x\}$ or $\{x\} \times \mb{P}^1$
for $x \in \mb{P}^1$. Since $(\mb{P}^1 \times \{x\})\cdot(\mb{P}^1 \times \{y\})=0=(\{x\} \times \mb{P}^1)\cdot(\{y\} \times \mb{P}^1)$ and
$(\{x\} \times \mb{P}^1)\cdot(\mb{P}^1 \times \{y\})=1$ for any $x,y \in \mb{P}^1$, $C\cdot l=b$ for any line $l \subset X$. For any line $l$ not contained in $X$, $C\cdot l \le X\cdot l = 2$. Hence, the gonality $\mr{gon}(C)=\min\{2b-b, 2b-2, 2b-1,2b\}=b$. This proves $(1)$.

(2): Suppose $a=3$. Recall, $X_3$ (see Definition \ref{sol6}) contains $27$ lines, $E_i$ for $i=1,...,6$, $F_{ij}$, $i \not= j$ and $G_j$ for $j=1,...,6$ where $F_{ij} \sim L-E_i-E_j$ and $G_j \sim 2L-\sum_{i \not= j} E_i$ (see \cite[Proposition V.$4.8$ and Theorem V.$4.9$]{hartshorne2013algebraic}). By \cite[Proposition V.$4.8$]{hartshorne2013algebraic},
the hyperplane section $H$ is linearly equivalent to $3L-\sum E_i$. Since $C \sim bH$, we have $C\cdot E_i=b, C\cdot L=3b, C\cdot F_{ij}=3b-2b=b$ and 
$C\cdot G_j=6b-5b=b$. Hence, the gonality of $C$ is $2b$. 
\end{proof}

\begin{teo}
\label{teo:quadric}
The codimension $c(a,b)$ of an irreducible component of $\mc{M}_{g(a,b),sol}^a$ in $\mc{M}_{g(a,b)}^a$ satisfies the following:
\begin{enumerate}
\item If $a=2, b \ge 5$ then $c(a,b)>0$,
\item If $a=3, 3 \le b \le 5$ then $c(a,b)>0$,
\item If $a=2, 2 \le b \le 4$ then $c(a,b)=0$.
\end{enumerate}
\end{teo}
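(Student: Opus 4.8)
<br>

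The plan is to combine the gonality computations from Proposition \ref{sol5} with the codimension estimate in Proposition \ref{s1}, using the fiber-dimension bound of Proposition \ref{s4} to translate statements about the Hilbert scheme into statements about $\mc{M}_{g(a,b)}^a$. First I would establish the dimension of the ambient locus $\mc{M}_{g(a,b)}^a$. Since a curve in $\mc{M}_{g(a,b)}^a$ is a complete intersection of a degree $a$ and a degree $b$ surface, I would compute $\dim \mr{Hilb}_{P(a,b)}$ as $h^0(\mo_{\mb{P}^3}(a)) + h^0(\mo_{\mb{P}^3}(b)) - 2$ (choices of the two defining polynomials up to scaling), and then subtract the generic fiber dimension of the moduli map $\mu$, which by Proposition \ref{s4} is at most $h^0(\T_{\mb{P}^3}) = \dim \mr{Aut}(\mb{P}^3) = 15$. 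This yields $\dim \mc{M}_{g(a,b)}^a$.

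Next I would handle the positive-codimension claims (1) and (2). Using Proposition \ref{sol5}, a general complete intersection curve has gonality $b$ when $a=2$ and $2b$ when $a=3$. For $a=2,\, b\ge 5$ the gonality is $b \ge 5$, and for $a=3,\, 3 \le b \le 5$ the gonality is $2b \ge 6 \ge 5$, so in both ranges the hypotheses of Proposition \ref{s1} are met: the general curve admits a $d:1$ covering with $d \ge 5$. I would therefore argue that any irreducible component of $\mc{M}_{g(a,b),sol}^a$ whose general member is primitive and solvable has dimension at most $g(a,b)+4$. Comparing this with $\dim \mc{M}_{g(a,b)}^a$ computed above—which grows quadratically in $b$ while $g(a,b)+4$ is controlled—gives the strict inequality $\dim \mc{M}_{g(a,b),sol}^a < \dim \mc{M}_{g(a,b)}^a$, hence $c(a,b)>0$. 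The genus $g\ge 7$ hypothesis of Proposition \ref{s1} should be checked directly from the genus formula in each range.

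For the vanishing claim (3), $a=2$ and $2 \le b \le 4$, the gonality is $b \le 4$, so by the remark in the introduction a $b$-gonal curve with $b \le 4$ is automatically solvable: its monodromy group is a subgroup of $S_4$. Thus every curve in $\mc{M}_{g(a,b)}^2$ is solvable, which forces $\mc{M}_{g(a,b),sol}^2 = \mc{M}_{g(a,b)}^2$ and hence $c(a,b)=0$. I would still need to confirm primitivity, or more carefully to note that solvability alone suffices for membership in the solvable sublocus as defined.

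The main obstacle I anticipate is the bookkeeping in part (2): verifying that Proposition \ref{s1} genuinely applies to \emph{primitive} coverings of cubic-surface curves, and more delicately ensuring that the component $L$ under consideration is not entirely contained in a lower-gonality sublocus that would escape the $d \ge 5$ hypothesis. One must rule out the degenerate possibility that the solvable covering realizing the gonality is \emph{imprimitive}, in which case Proposition \ref{s1} does not directly bound the dimension; the cleanest fix is to invoke the factorization through a primitive covering described in the introduction and apply the Zariski bound to the primitive factor, then reassemble. Carrying out this primitivity reduction, together with pinning down the precise value of $\dim \mc{M}_{g(a,b)}^a$ and checking the numerical inequality in every admissible $(a,b)$, is where the real work lies.
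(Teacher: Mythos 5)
Your proposal follows the paper's proof essentially step for step: Proposition \ref{s4} bounds the fibres of the moduli map by $h^0(\T_{\mb{P}^3})=15$, Proposition \ref{sol5} gives gonality $\ge 5$ in the ranges of (1) and (2) so that Proposition \ref{s1} caps any component of the solvable locus at dimension $g(a,b)+4$, and in (3) gonality $\le 4$ forces solvability outright, so the two loci coincide. The one concrete flaw is your dimension count for the parameter space: $h^0(\mo_{\mb{P}^3}(a))+h^0(\mo_{\mb{P}^3}(b))-2$ is the dimension of the space of \emph{pairs of defining polynomials} up to scaling, not of the Hilbert scheme. For $a<b$ the curve determines the degree $b$ surface only modulo the ideal generated by the degree $a$ equation, so one must also subtract $h^0(\mo_{\mb{P}^3}(b-a))$; the paper uses $\dim \mr{Hilb}_{P(a,b)}=\binom{a+3}{3}+\binom{b+3}{3}-\binom{b-a+3}{3}-2-h^0(\mo_{\mb{P}^3}(a-b))$. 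Since you need a \emph{lower} bound on $\dim \mc{M}_{g(a,b)}^a$ to compare against $g(a,b)+4$, an overcount here is not harmless bookkeeping: it invalidates the inequality as written, and the margin is genuinely tight in case (2) --- the corrected count gives $c(3,b)\ge \frac{18b-3b^2}{4}-1$, which is positive exactly for $3\le b\le 5$ and negative already at $b=6$, which is why the statement stops where it does. With the corrected formula your argument goes through in all the claimed ranges. Your worry about imprimitivity in (1)--(2) is moot given the definitions: $\mc{M}_{g(a,b),sol}^a$ parametrizes PS--curves, i.e.\ curves admitting a primitive solvable cover, and since Proposition \ref{sol5} applies to \emph{every} curve in $\mc{M}_{g(a,b)}^a$ (not just general ones), any such cover automatically has degree $\ge 5$, which is all Proposition \ref{s1} requires.
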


\begin{proof}
 By Proposition \ref{s4}, the dimension of the fiber of the moduli map $\mu:\mr{Hilb}_{P(a,b)} \to \mc{M}_{g(a,b)}$ is at most $h^0(\T_{\mb{P}^3})$. It is easy to check that \[\dim \mr{Hilb}_{P(a,b)}=\binom{a+3}{3}+\binom{b+3}{3}-\binom{b-a+3}{3}-2-h^0(\mo_{\mb{P}^3}(a-b)). \]
 So, \[\dim \Ima \mu \ge \frac{a^3+11a}{3}+\frac{b^2a-ba^2}{2}+2ba-16-h^0(\mo_{\mb{P}^3}(a-b)). \]
 By Proposition \ref{sol5}, the gonality of $C \in \mc{M}_{g(a,b)}^a$ is at least $5$ in the case $a=3, b \ge a$ and when $a=2, b \ge 5$.
For these values of $a$ and $b$, by Proposition \ref{s1}, the codimension $c(a,b)$ of $\mc{M}_{g(a,b),sol}^a$  is at least \[\frac{a^3+11a}{3}+\frac{2b^2a-2ba^2-b^2a^2}{4}+3ba-21-h^0(\mo_{\mb{P}^3}(a-b)).\] 

(1): Substituting $a=2$ in the above equation, we observe that for $b \ge 5$, $c(2,b)>0$.

(2): Substituting $a=3$ in the above equation, we observe \[c(3,b) \ge \frac{18b-3b^2}{4}-1.\] For $3 \le b \le 5$, we have $c(3,b)>0$. 

(3): Substituting $a=2$, by Proposition \ref{sol5}, the gonality of $C\in \mc{M}_{g(a,b)}^a$ is strictly less than $5$ for  $2 \le b \le 4$. 
Hence, $C$ is solvable. This implies that $\mc{M}_{g(a,b),sol}^a$ coincides with $\mc{M}_{g(a,b)}^a$. 
\end{proof}

\newpage
\nocite{*}
\bibliographystyle{alpha}
\bibliography{bibliography}

\newcommand{\etalchar}[1]{$^{#1}$}
\begin{thebibliography}{{Kan}05}

\bibitem[ABS13]{arbarello2013mukai}
Enrico Arbarello, Andrea Bruno, and Edoardo Sernesi.
\newblock Mukai's program for curves on a k3 surface.
\newblock {\em arXiv preprint arXiv:1309.0496}, 2013.

\bibitem[AC81]{arbarello1981footnotes}
Enrico Arbarello and Maurizio Cornalba.
\newblock Footnotes to a paper of {B}eniamino {S}egre.
\newblock {\em Mathematische Annalen}, 256(3):341--362, 1981.

\bibitem[AC83]{arbarello1983few}
Enrico Arbarello and Maurizio Cornalba.
\newblock A few remarks about the variety of irreducible plane curves of given
  degree and genus.
\newblock In {\em Annales scientifiques de l'{\'E}cole Normale Sup{\'e}rieure},
  volume~16, pages 467--488, 1983.

\bibitem[Bas96]{bas}
B{\'e}n{\'e}dicte Basili.
\newblock Indice de {C}lifford des intersections compl{\`e}tes de l'espace.
\newblock {\em Bulletin de la Soci{\'e}t{\'e} math{\'e}matique de France},
  124(1):61--95, 1996.

\bibitem[CLM93]{cili}
Ciro Ciliberto, Angelo Lopez, and Rick Miranda.
\newblock Projective degenerations of k3 surfaces, {G}aussian maps, and {F}ano
  threefolds.
\newblock {\em Invent. Math.}, 114(1):641--667, 1993.

\bibitem[FG]{friedgeneric}
Michael Fried and Robert Guralnick.
\newblock The generic curve of genus g> 6 is not uniformized by radicals.
\newblock {\em preprint}.

\bibitem[Fri89]{fried1989combinatorial}
Mike Fried.
\newblock Combinatorial computation of moduli dimension of {N}ielsen classes of
  covers.
\newblock {\em Contemp. Math.}, 89:61--79, 1989.

\bibitem[Gur03]{guralnick2003monodromy}
Robert Guralnick.
\newblock Monodromy groups of coverings of curves.
\newblock {\em Galois groups and fundamental groups}, 41:8, 2003.

\bibitem[Har79]{harris1979}
Joe Harris.
\newblock Galois groups of enumerative problems.
\newblock {\em Duke Math. J.}, 46(4):685--724, 12 1979.

\bibitem[Har13]{hartshorne2013algebraic}
Robin Hartshorne.
\newblock {\em Algebraic geometry}, volume~52.
\newblock Springer Science \& Business Media, 2013.

\bibitem[{Kan}05]{kanev2005irreducibility}
Vassil {Kanev}.
\newblock {Irreducibility of Hurwitz spaces}.
\newblock {\em ArXiv Mathematics e-prints}, September 2005.

\bibitem[Knu03]{knutsen2003gonality}
Andreas~Leopold Knutsen.
\newblock Gonality and {C}lifford index of curves on k3 surfaces.
\newblock {\em Archiv der Mathematik}, 80(3):235--238, 2003.

\bibitem[L{\etalchar{+}}86]{lazarsfeld1986brill}
Robert Lazarsfeld et~al.
\newblock Brill-{N}oether-{P}etri without degenerations.
\newblock {\em J. Diff. Geom.}, 23:299--307, 1986.

\bibitem[Neu92]{neubauer1992monodromy}
Michael~G Neubauer.
\newblock On monodromy groups of fixed genus.
\newblock {\em Journal of Algebra}, 153(1):215--261, 1992.

\bibitem[PS05]{pirola2005curve}
Gian~Pietro Pirola and Enrico Schlesinger.
\newblock A curve algebraically but not rationally uniformized by radicals.
\newblock {\em Journal of Algebra}, 289(2):412--420, 2005.

\bibitem[Rot14]{rotapirola}
Franco Rota.
\newblock Algebraic and {R}ational {U}niformization by {R}adicals.
\newblock Master's thesis, Università degli Studi di Pavia, 2013--2014.

\bibitem[RW06]{romagny2006hurwitz}
Matthieu Romagny and Stefan Wewers.
\newblock Hurwitz spaces.
\newblock {\em Groupes de Galois arithm{\'e}tiques et diff{\'e}rentiels},
  13:313--341, 2006.

\bibitem[Ser07]{ser}
Edoardo Sernesi.
\newblock {\em Deformations of algebraic schemes}, volume 334.
\newblock Springer Science \& Business Media, 2007.

\bibitem[Zar26]{zariski1926sull}
Oscar Zariski.
\newblock Sull'impossibilità di risolvere parametricamente per radicali
  un'equazione algebrica $f (x, y)= 0$ di genere $p> 6$ a moduli generali.
\newblock {\em Atti Accad. Naz. Lincei Rend. Cl. Sc. Fis. Mat. Natur. serie
  VI}, 3:660--666, 1926.

\end{thebibliography}

\end{document}